\newcommand{\diag}{\text{diag}}
\newtheorem{theorem}{Theorem}[section]
\newtheorem{proposition}[theorem]{Proposition}
\newtheorem{lemma}[theorem]{Lemma}
\newtheorem{corollary}[theorem]{Corollary}
{\theorembodyfont{\rm} \newtheorem{assumption}[theorem]{Assumption} }
{\theorembodyfont{\rm} \newtheorem{example}[theorem]{Example} }
{\theorembodyfont{\rm} \newtheorem{remark}[theorem]{Remark} }
{\theorembodyfont{\rm} \newtheorem{algorithm}[theorem]{Algorithm} }
\newcommand{\proof}[1][]{{{\sc Proof}\xspace#1. }}
\newcommand{\refeq}[1]{\klasm{\ref{eq:#1}}}
\newcommand{\reza}{ \mathbb{R}\hspace{0.1mm}}
\newcommand{\varx}{x}
\newcommand{\vary}{y}
\newcommand{\klasm}[1]{(#1)}
\newcommand{\kla}[1]{(#1)}
\newcommand{\klafn}[1]{(#1)}
\newcommand{\klami}[1]{(#1)}
\newcommand{\myint}[2]{(I#1)(#2)}
\newcommand{\ints}[4]{\int_{#1}^{#2}#3 \, #4}
\newcommand{\cdott}{\hspace{0.4mm}}
\newcommand{\cdottsm}{\hspace{0.3mm}}
\newcommand{\for}{\quad \text{for} \ \ }
\newcommand{\foreach}{\quad \text{for each} \ \ }
\newcommand{\intervalarg}[3]{#2 \hspace{0mm} \le \hspace{0mm} #1 \hspace{0mm} \le \hspace{0mm} #3}
\newcommand{\intervalargno}[3]{#2 \le #1 \le #3}
\newcommand{\xmax}{b}
\newcommand{\xmin}{a}
\newcommand{\xminpl}{a+}
\newcommand{\diffxmaxxmin}{\xmax-\xmin}
\newcommand{\diffxmaxxminkla}{\kla{\xmax-\xmin}}
\newcommand{\interval}[2]{\eckkla{#1, #2 }}
\newcommand{\eq}{\ = \ }
\newcommand{\mywlog}{without loss of generality\xspace}
\newcommand{\mfrac}[2]{\dfrac{\mbox{\footnotesize \raisebox{-0.5mm}{$#1$}}}%
{\mbox{\footnotesize \raisebox{0.8mm}{$#2$}}}}
\newenvironment{mylist_indent}{%
\begin{list}{\tinybullet}
{\setlength{\topsep}{0.2cm}
\setlength{\itemsep}{0mm}
\setlength{\labelwidth}{2mm}
\setlength{\labelsep}{3mm}
\setlength{\itemindent}{0mm}
\setlength{\leftmargin}{5mm}
}}{\end{list}}
\newcommand{\alpinv}{\alpha^{{\scriptscriptstyle (-1)}}}
\newcommand{\betinv}{\beta^{{\scriptscriptstyle (-1)}}}
\newcommand{\gaminv}{\gamma^{{\scriptscriptstyle (-1)}}}
\newcommand{\mysum}[2]{\mathop{\mbox{\small $\dis
\sum\nolimits$}}_{#1}^{#2}}
\newcommand{\myomega}{w}
\newcommand{\tinybullet}{{\tiny \raisebox{0.6mm}{$ \bullet $}}}
\newcommand{\with}{\quadsm \text{with} \ \ }
\newcommand{\withsome}{\quad \text{with some}  \  \ }
\newcommand{\stepsize}{step size\xspace}
\newcommand{\myseqq}[3]{#1, \allowbreak #2, \allowbreak \ldots, #3}
\newcommand{\myseqqsh}[3]{#1, \allowbreak \ldots, #3}
\newcommand{\dis}{\displaystyle}
\newcommand{\bn}{\bigskip \noindent}
\newcommand{\ie}{i.\,e.,\xspace}
\newcommand{\nmax}{N}
\newcommand{\bs}{-}
\newcommand{\stepsizes}{\stepsize{}s\xspace}
\newcommand{\Stepsizes}{Step sizes\xspace}
\newcommand{\eg}{e.g.,\xspace}
\newcommand{\minus}{-}
\newcommand{\wrt}{with respect to\xspace}
\newcommand{\Landau}{\mathcal{O}}
\newcommand{\Landauno}[1]{\Landau\kla{#1}}
\newcommand{\Landaula}[1]{\Landau\klala{#1}}
\newcommand{\as}{\quad \text{as} \ \ }
\newcommand{\cf}{cf.~}
\newcommand{\Ftp}{For this purpose\xspace}
\newcommand{\ableit}[2]{#1^{\klafn{#2}}}
\newcommand{\myunderbrace}[2]{\underbrace{\raisebox{-0.8mm}{\vphantom{$ #1 $}} #1 }_{\dis #2}}
\newcommand{\lfrac}[2]{#1/ #2}
\newcommand{\myxi}{\xi}
\newcommand{\genfunc}{generating function\xspace}
\newcommand{\genfuncs}{\genfunc{}s\xspace}
\newcommand{\koza}{ \mathbb{C} }
\newcommand{\modul}[1]{\vert \hspace{0.3mm} #1 \hspace{0.3mm} \vert}
\newcommand{\plus}{+}
\newcommand{\maxnorm}[1]{\norm{#1}_\infty }
\newcommand{\norm}[1]{\Vert \hspace{0mm} #1 \hspace{0mm} \Vert}
\newcommand{\prim}[1]{#1^{\prime}}
\newcommand{\gridpoint}{node\xspace}
\newcommand{\gridpoints}{\gridpoint{}s\xspace}
\newcommand{\myoverbrace}[2]{\overbrace{\raisebox{0.8mm}{\vphantom{$ #1 $}} #1}^{\dis #2}}
\newcommand{\feldstretch}[1]{\renewcommand{\arraystretch}{#1}}
\newcommand{\myrnn}[1][\n]{\reza^{#1\times#1}}
\newcommand{\Landausm}[1]{\Landau\klasm{#1}}
\newcommand{\defeq}{:=}
\newenvironment{myenumerate}{%
\begin{list}{(\alph{enumcount})}
{\setcounter{enumcount}{1}\usecounter{enumcount}
\setlength{\topsep}{1mm}
\setlength{\itemsep}{0mm}
\setlength{\labelwidth}{0mm}
\setlength{\labelsep}{1mm}
\setlength{\itemindent}{1mm}
\setlength{\leftmargin}{0mm}
}}{\end{list}}
\newcommand{\klasmsh}[1]{\mbox{\scriptsize\raisebox{0.2mm}{$($}}\nolinebreak\hspace{-0.0mm}\raisebox{-0.1mm}{$#1$}\hspace{-1mm}\nolinebreak\mbox{\scriptsize\raisebox{0.2mm}{$)$}}}
\newcommand{\proofendspruch}[1][]{This completes the proof#1.\proofend}
\newcommand{\proofend}{\myhfill $ \endproof $}
\newcommand{\myhfill}{\hspace*{\fill}}
\renewcommand{\proofend}{\myhfill \endproof}
\def\endproof{$\Box$}
\newcommand{\rhs}{right\bs{}hand side\xspace}
\newcommand{\xn}[1][n]{x_{#1}}
\newcommand{\xm}{\xn[\emm]}
\newcommand{\xs}{\xn[s]}
\newcommand{\xr}{\xn[r]}
\newcommand{\myassump}[2]{\begin{assumption} #1 \label{th:#2} \end{assumption}}
\newcommand{\matvecform}{matrix\bs{}vector formulation\xspace}
\newenvironment{myenumerate_indent}{%
\begin{list}{(\alph{enumcount})}
{\setcounter{enumcount}{1}\usecounter{enumcount}
\setlength{\topsep}{1mm}
\setlength{\itemsep}{-0mm}
\setlength{\labelwidth}{5mm}
\setlength{\labelsep}{2mm}
\setlength{\itemindent}{-0mm}
\setlength{\leftmargin}{7mm}
}}{\end{list}}
\newcommand{\tee}{{ \hspace{-0.1mm} \top \! }}
\newcommand{\quadti}{\hspace{1.5mm}}
\newcommand{\Ah}{A_h}
\newcommand{\Fh}{G_h^\delta}
\newcommand{\Fha}{G_{h,1}}
\newcommand{\Fhb}{G_{h,2}^\delta}
\newcommand{\Bh}{B_h}
\newcommand{\Uh}{U_h}
\newcommand{\Uhinv}{U_h^{-1}}
\newcommand{\Vh}{V_h}
\newcommand{\Vhinv}{V_h^{-1}}
\newcommand{\Wh}{W_h}
\newcommand{\Whinv}{W_h^{-1}}
\newcommand{\Mhs}[1][t]{M_h}
\newcommand{\Chs}[1][t]{C_h}
\newcommand{\erra}{\mathcal{E}}
\newcommand{\Sh}{S_h}
\newcommand{\Shinv}{S_h^{-1}}
\newcommand{\myl}{\ell}
\newcommand{\klabi}[1]{\big(\cdottsm #1 \cdottsm\big)}
\newcommand{\eckklabi}[1]{\big[\cdott #1 \cdott \big]}
\newcommand{\quadsm}{\hspace{3mm}}
\newcommand{\klala}[1]{\Big(\cdottsm #1 \cdottsm\Big)}
\newcommand{\n}{n}
\newcommand{\fndelta}[1][n]{f_{#1}^\delta}
\newcommand{\enndelta}[1][s]{e_{#1}^\delta}
\newcommand{\undelta}[1][n]{u_{#1}^\delta}
\newcommand{\voltinteqspur}{Volterra integral equations\xspace}
\newcommand{\Landaubi}[1]{\Landau\klabi{#1}}
\newcommand{\Ehdelta}[1][n]{\Delta_h^\delta}
\newcommand{\Fhdelta}[1][n]{F_h^\delta}
\newcommand{\rhss}{\rhs{}s\xspace}
\newcommand{\mycitea}[2]{#1~\cite{#1[#2]}}
\newcommand{\mynocitea}[2]{\cite{#1[#2]}}
\newcommand{\myciteatwo}[2]{#1~\cite{#2}}
\newcommand{\mycitec}[4]{#1\cdott{\myslash}\cdott#2\cdott{\myslash}#3~\cite{#1_#2_#3[#4]}}
\newcommand{\myciteb}[3]{#1\cdott /\cdott #2~\cite{#1_#2[#3]}}
\newcommand{\mycitebtwo}[3]{#1\cdott /\cdott #2~\cite{#3}}
\newcommand{\myslash}{\cdottsm/\cdottsm\xspace}
\newcommand{\myk}[2]{k_{#1#2}}
\newcommand{\mykkom}[2]{k_{#1,#2}}
\newcommand{\psitkomb}[2]{g_h(\xn[#1],\xn[#2])}
\newcommand{\voltinteqs}{Volterra integral equations of the first kind %
with smooth kernels\xspace}
\newcommand{\voltinteqssh}{Volterra integral equations\xspace}
\newcommand{\msm}{multistep method\xspace}
\newcommand{\mss}{multistep scheme\xspace}
\newcommand{\msms}{\msm{}s\xspace}
\newcommand{\ksv}[1][\emm]{$ #1 $\bs{}step method\xspace}
\newcommand{\ksvs}[1][\emm]{$ #1 $\bs{}step methods\xspace}
\newcommand{\emm}{m}
\newcommand{\fxn}[1][n]{f\klasm{x_{#1}}}
\newcommand{\genpol}{\generating polynomial\xspace}
\newcommand{\fps}{power series\xspace}
\newcommand{\locerr}[1]{\eta\klasm{#1}}
\newcommand{\locerrbi}[1]{\eta\klabi{#1}}
\newcommand{\mycitectwo}[4]{#1\cdott{\myslash}\cdott#2\cdott{\myslash}#3~\cite{#4}}
\newcommand{\mysumtxt}[2]{\sum_{#1}^{#2}}
\newcommand{\intstxt}[4]{\int_{#1}^{#2}#3 \, #4}
\newcommand{\eckkla}[1]{[\cdott #1 \cdott ]}
\newcommand{\myfun}{\psi}
\newcommand{\myfuns}{\myfun_s}
\newcommand{\myphi}{\varphi}
\newcommand{\wstart}[1]{\myomega_{#1}}
\newcommand{\idstar}{\stackrel{(*)}{ = }}
\newcommand{\cont}{continuous\xspace}
\newcommand{\hdeltatonull}{\kla{h, \cdott \delta} \to 0}
\newcommand{\mydeltax}{h}
\newcommand{\N}{N}
\newcommand{\globerr}[1][\n]{r_h(\xn[#1])}
\newcommand{\rh}{R_{h}}
\newcommand{\en}[1][\n]{e_{\n}}
\newcommand{\remarkend}{\quad $ \vartriangle $}
\newcommand{\inset}[1]{\{ \, #1 \, \}}
\newcommand{\myr}{r}
\newcommand{\inverse}{inverse\xspace}
\newcommand{\myast}{\cdot}
\newcommand{\octave}{O\textsc{ctave}\xspace}
\newcommand{\mymu}{\mu}
\newcommand{\eset}{E}
\renewcommand{\max}{\mathop{\textup{max}}}
\newcommand{\mya}{a}
\newcommand{\myb}{b}
\newcommand{\myrho}{\varrho}
\newcommand{\mygamma}{\gamma}
\newcommand{\pol}{\mathcal{P}}
\newcommand{\polr}[1][r]{\mathcal{P}_{#1}}
\newcommand{\myp}{{p_0}}
\newcommand{\mym}{m}
\newcommand{\myd}{m}
\newcommand{\mygammainvn}[1][n]{\gaminv_{#1}}
\newcommand{\mygammainvnmu}[1][n]{\gaminv_{#1}}
\newcommand{\mygammainvnmupur}{\gaminv_{0}}
\newcommand{\mygamman}[1][n]{\mygamma_{#1}}
\newcommand{\mygammanmu}[1][n]{\mygamma_{#1}}
\newcommand{\mygammanmupur}{\mygamma_{0}}
\newcommand{\mtilde}{M}
\newcommand{\nmaxpmu}{\nmax_1}
\newcommand{\nmaxpmumo}{\nmax-\mym-\mu}
\newcommand{\myendelta}[1][n]{e_{#1}^\delta}
\newcommand{\generating}{characteristic\xspace}
\newcommand{\maxconsistenceorder}{maximal order\xspace}
\newcommand{\consistenceorder}{order\xspace}
\newcommand{\myq}{\tau}
\newcommand{\myqq}{p}
\newcommand{\myxpo}{\tau}
\newcommand{\myfunspectb}[1]{g_h(#1)}
\newcommand{\psidel}[1][s]{\myfun^\delta_{#1}}
\newcommand{\phidel}[1][r]{\myphi^\delta_{#1}}
\newcommand{\mykap}{\ell}
\newcommand{\inapp}{initial approximation\xspace}
\newcommand{\inapps}{\inapp{}s\xspace}
\newcommand{\cond}{\textup{cond}\xspace}
\newcommand{\Ns}[1][s]{N_{#1}}
\newcommand{\Nsmin}{\underline{N}}
\newcommand{\hs}[1][s]{h_{#1}}
\newcommand{\hstar}{h_{*}}
\newcommand{\hstst}{h^\prime}
\newcommand{\Ih}[1]{\Delta({#1})}
\newcommand{\Hdelta}{H^\delta}
\newcommand{\Hdeltatwo}{\tilde{H}^\delta}
\newcommand{\Hset}{\Sigma}
\newcommand{\Mdelta}{M^\delta}
\newcommand{\udeltas}[2]{u^\delta(#1,#2)}
\newcommand{\squer}{\overline{s}}
\newcommand{\hdelta}{h(\delta)}
\newcommand{\hdeltapl}{h^+(\delta)}
\newcommand{\myeta}{\beta}
\newcommand{\myhbound}{\overline{h}}
\newcommand{\muL}{\mu L}
\newcommand{\myL}{L}
\newcommand{\myC}{C}
\newcommand{\myChat}{\widehat{C}}
\newcommand{\Cpl}[2][\myL]{\myC^{#2}_{#1}}
\newcommand{\UCpl}[1]{\myChat^{#1}}
\newcommand{\Cp}[1]{C^{#1}}
\newcommand{\Nmin}{N_\textup{min}}
\newcommand{\hmax}{h_\textup{max}}
\newcommand{\Rtil}{\widetilde{R}}
\newcommand{\Deltap}{\Delta f}
\newcommand{\schurpolynomial}{Schur polynomial\xspace}
\DeclareMathOperator*{\mycup}{\cup}
\title{The regularizing properties of multistep methods for first kind Volterra integral equations with smooth kernels}
\author{Robert Plato%
\thanks{Department of Mathematics, University of Siegen,
Walter-Flex-Str.~3, 57068 Siegen, Germany.
}
}
\begin{document}

\maketitle
\newcounter{enumcount}
\renewcommand{\theenumcount}{(\alph{enumcount})}

\bibliographystyle{plain}

\begin{abstract}
We study quadrature methods for solving Volterra integral equations of the first kind  with smooth kernels under the presence of noise in the right-hand sides,
with the quadrature methods being generated by linear multistep methods.
The regularizing properties of an a priori choice of the \stepsize are analyzed, with
the smoothness of the involved functions carefully taken into consideration.
The balancing principle as an adaptive choice of the \stepsize is also studied.
It is considered in a version which sometimes requires less amount of computational work than the standard version of this principle.
Numerical results are included.
\end{abstract}
\section{Introduction}
\label{intro-multstep-methods}
In this paper we consider linear \voltinteqspur of the following form,
\begin{align}
\klasm{Au}\klasm{\varx} =
\ints{\xmin}{\varx}
{k\kla{\varx,\vary} u\klami{\vary} }
{d \vary}
= f\klasm{\varx}
\for \intervalarg{\varx}{\xmin}{\xmax},
\label{eq:volterra-inteq}
\end{align}
with a sufficiently smooth kernel function
$ k: \inset{(x,y) \in \reza^2 \ \mid \ \xmin \le y \le x \le \xmax } \to \reza $.
Moreover, the function $ f: \interval{\xmin}{\xmax} \to \reza $
is supposed to be approximately given,
and a function $ u: \interval{\xmin}{\xmax} \to \reza $
satisfying equation \refeq{volterra-inteq} needs to be determined.

In the sequel we suppose that the kernel function does not vanish on the
diagonal $ \xmin \le \varx = \vary \le \xmax $, and
\mywlog we may assume that
\begin{align*}
k\klasm{\varx,\varx} = 1 \for
\intervalarg{\varx}{\xmin}{\xmax}
\end{align*}
holds. 

Composite quadrature methods for the approximate solution of 
equation \refeq{volterra-inteq} are well-investigated if the \rhs $ f $ is exactly given,
see
\eg \mycitebtwo{Brunner}{van der Houwen}{Brunner_Houwen[86]},
\mycitea{Brunner}{04},
\mycitea{Lamm}{00},
\mycitea{Linz}{85}
or \myciteb{Hoog}{Weiss}{73}
and the references therein. 
A special class of composite quadrature methods for the approximate solution of
\refeq{volterra-inteq} is obtained by using 
in an appropriate manner \msms that usually are used to solve initial value problems for first order ordinary differential equations.
That class of methods is considered thoroughly in
Wolkenfelt (\mynocitea{Wolkenfelt}{79}, \mynocitea{Wolkenfelt}{81}). A related survey is given in
\mycitebtwo{Brunner}{van der Houwen}{Brunner_Houwen[86]},
and see \mycitec{Holyhead}{McKee}{Taylor}{75},
\myciteb{Holyhead}{McKee}{76}
and \mycitea{Taylor}{76} for related results.
In the present paper, the results and techniques presented in the two papers
by Wolkenfelt are modified and extended in order to analyze the regularizing properties 
of those \msms for Volterra integral equations \refeq{volterra-inteq} when perturbed \rhss are available only.
An a priori choice of the  \stepsize is considered, followed by the balancing principle as an adaptive choice of the \stepsize.
Finally, some numerical illustrations are presented.
\section{Numerical integration based on multistep methods}
In this section, as a preparation for the numerical solution of \voltinteqssh of the first kind \refeq{volterra-inteq} with smooth kernels, we introduce linear \msms for solving the associated direct problem. \Ftp we consider
equidistant \gridpoints 
\begin{align}
\xs = \xminpl s \mydeltax, \qquad s = 0, 1, \ldots,\N, 
\with \mydeltax = \tfrac{\diffxmaxxmin}{\N},
\label{eq:grid-points}
\end{align}
where $ \N $ denotes a positive integer.
In a first step we consider -- for each $ 1 \le \n \le \nmax $ -- the integral
\begin{align}
\myint{\myfun}{\xn}
:= \ints{\xmin}{\xn} 
{ \myfun\klami{\vary} }
{d \vary},
\label{eq:integration}
\end{align}
where $ \myfun: \interval{\xmin}{\xn} \to \reza $ is a given \cont function which may depend on $ n $.
In the course of this paper, this integral will be considered for the special case
$ \myfun(y) = k(\xn,y) u(y), \, \xmin \le x \le …\xn $; see Section \ref{msm-volterra-first-kind} for the details. 

The integral \refeq{integration} can be computed by solving the elementary ordinary differential equation
\begin{align}
\prim{\myphi}(\vary) = \myfun(\vary)
\for \intervalarg{\vary}{\xmin}{\xn}, \qquad \myphi(\xmin) = 0,
\label{eq:ivp-simple}
\end{align}
and then obviously 
$ \myint{\myfun}{\xn} = \myphi(\xn) $.  
Next we briefly introduce some basic facts about linear \msms to solve initial value problems for ordinary differential equations, with a notation that is adapted to the simple situation considered 
in \refeq{ivp-simple}.
For a thorough presentation of \msms (to solve initial value problems for 
ordinary differential equations in its general form), see \eg
\mynocitea{Plato}{03}, 
\mycitectwo{Hairer}{N{\o}rsett}{Wanner}{Hairer_etal[08]},
\mycitea{Henrici}{62},
or
\mycitea{Iserles}{08}.
\subsection{Introduction of \msms}
\label{msm_intro}
A linear \ksv, with an integer $ m \ge 1 $,
is determined by coefficients $ \mya_j \in \reza $ and $ \myb_j \in \reza $
for $ j =\myseqq{0}{1}{\emm} $, where $ \mya_\emm \neq 0 $ and
and $ \modul{\mya_0} + \modul{\myb_0} \neq 0 $.
When applied to problem \refeq{ivp-simple},
this scheme is of the form
\begin{align}
\mysum{j=0}{\emm} \mya_j  \myphi_{\myr + j} =
h \mysum{j=0}{\emm} \myb_j \myfun_{\myr + j}
\for  \myr \eq \myseqq{0}{1}{n-\emm},
\label{eq:multstep-def}
\end{align}
where $ n \ge \emm $, and $ \myfun_s = \myfun(\xs),  s = \myseqq{0}{1}{n} $ are given,
and the \stepsize $ h $ and the \gridpoints $ \xs $ are given by
\refeq{grid-points}. 
In addition we have $ \myphi_0 = 0 $, and the other starting values
$ \myphi_s \approx \myphi(\xs) $ for $ s = \myseqq{1}{2}{m-1} $ are determined by some procedure specified below
(see Example \ref{th:interpolatory-start-weights}).
The scheme \refeq{multstep-def} is used to compute approximations
$ \myphi_{r+m} \approx \myphi(\xn[r+m]) $ for $ r = \myseqq{0}{1}{n-m} $.
\begin{example}
\label{th:msm-examples}
\begin{myenumerate}
\item We first consider a well-known class of \msms of the form \refeq{multstep-def}, depending on three 
integers $ \myq, \, \mu $ and $ \mym $,
with $ 1 \le \myq \le m $ and $ 0 \le \mu \le m $. It
is obtained by integrating, for each $ 0 \le r \le n-m $,
the ordinary differential equation 
\refeq{ivp-simple} from $ x_{\myr+m-\myq} $ to $ x_{\myr+m} $.
For the integral of the resulting right-hand side, an interpolatory numerical integration scheme
with interpolation nodes $ \myseqq{x_r}{x_{r+1}}{x_{r+m-\mu}} $
is applied afterwards. This leads to
\begin{align}
\myphi_{\myr+m} - \myphi_{\myr+m-\myq}
=
\int_{x_{\myr+m-\myq}}^{x_{\myr+m}} \polr(y) \ dy,
\qquad
\myr = \myseqq{0}{1}{n-\emm},
\label{eq:quad-ansatz}
\end{align}
where $ \polr \in \Pi_{m-\mu} $ satisfies
$ \polr\klasm{\xs} = \myfun_{s} $ for
$ s = \myseqq{\myr}{\myr+1}{\myr + m-\mu} $.
This means that $ \myq h $ is the length of the interval used for the local integration, 
and $ m-\mu+1 $ is the number of \gridpoints used for the interpolation of the function 
$ \myfun $.
Prominent examples are obtained for 
$ \mu \in \{0,1\} $ and $ \myq  \in \{1,2\} $. Next some special cases are considered very briefly. For more details see, e.g., 
the references given just before the present subsection.

The Adams--Bashfort methods are obtained for $\myq=1, \, \mu = 1 $ and $ \mym \ge 1 $; for the special case $ m = 1 $ this in fact gives the composite forward rectangular rule.
The Adams--Moulton methods are obtained for $ \myq=1, \, \mu = 0 $ and $ \mym \ge 1 $, with the composite trapezoidal rule obtained for the special case $ \mym = 1 $.
The Nystr\"{o}m methods are given by $ \myq=2, \, \mu = 1 $ and $ \mym \ge 2 $. For $ \mym = 2 $ this gives the repeated midpoint rule.
Finally, the Milne--Simpson methods are obtained by $ \myq=2, \, \mu = 0 $ and $ \mym \ge 2 $, with the
repeated Simpson's rule obtained in the case $ \mym = 2 $.
Each of these methods is in fact of the form \refeq{multstep-def} and
leads to a repeated quadrature method
for solving \refeq{integration}, with
interpolation polynomials $\polr $ that, for $ m > \myq $,
have nodes outside the local integration 
interval $ \interval{\xn[r+\emm-\tau]}{\xn[r+\emm]} $.

\item Another class of linear \msms of the form \refeq{multstep-def}
are BDF methods (backward differentiation formulas), where the left-hand side in \refeq{ivp-simple} is replaced by a finite difference scheme. More precisely, for $ m $ fixed, approximations
$ \myphi_{\myr+m} \approx \myphi(\xn[r+m]) $ for $ r = \myseqq{0}{1}{n-m} $
are given by $ \myphi_{\myr+m} = \pol\klasm{x_{\myr+m}} $,
where $ \pol \in \Pi_{m} $ satisfies
$ \pol(\xs) = \myphi_{s} $
for $ s = \myseqq{\myr}{\myr+1}{\myr + m-1} $ and
$ \prim{\pol}\klasm{x_{\myr+m}} = \myfun_{\myr+m} $. For $ m = 1 $ this leads to the 
composite backward rectangular rule.
\remarkend
\end{myenumerate}
\end{example}
\subsection{Null stability, order of the method}
\label{order}
We next recall some basic notation for \msms applied to the simple initial value problem~ \refeq{ivp-simple}. 
\begin{myenumerate}
\item
The considered \msm is called nullstable, if the corresponding first \generating polynomial 
\begin{align}
\myrho(\xi) = \mya_m \xi^m + \mya_{m-1} \xi^{m-1} + \dots + \mya_0
\label{eq:multstep-gener-pol}
\end{align}
is a simple von Neumann polynomial, \ie
\begin{align}
\textup{(i)} \ \
\myrho(\xi) = 0 \ \textup{implies} \  \vert \xi \vert \le 1,
\qquad
\textup{(ii)} \ \
\myrho(\xi) =  0, \ \vert \xi \vert = 1 
\ \textup{implies} \
\prim{\myrho}(\xi) \neq  0.
\label{eq:nullstable}
\end{align}
This means that all roots of the characteristic polynomial $ \myrho $ belong to the closed unit disk, and each root on the unit circle is simple.
 
\item 
We next consider the local truncation error of the considered \msm. For technical reasons it is introduced here on arbitrary intervals $ \interval{c}{d} $
which in fact can be $ \interval{\xmin}{\xn} $ as above, or an interval of fixed length. 

For a continuous function $ \myfun: \interval{c}{d} \to \reza $ 
the local truncation error
is given by
\begin{align}
\locerr{\myfun,y,h} \defeq
\sum_{j=0}^m \mya_j 
\myphi(y+jh)
- h \sum_{j=0}^m \myb_j \myfun(y+jh),
\qquad
c \le y \le  d-mh, \ h > 0,
\label{eq:locerr}
\end{align}
where
$ \myphi: \interval{c}{d} \to \reza $ satisfies
$ \prim{\myphi}(\vary) = \myfun(\vary) $ for
$ \intervalarg{\vary}{c}{d} $ and $ \myphi(c) = 0 $.
The \msm~\refeq{ivp-simple} is by definition of (consistency) order $ \myqq $ with an integer $ \myqq \ge 1 $, if
on a fixed test interval $ \interval{c}{d} $
and for each 
sufficiently smooth function $ \myfun:[c,d] \to \reza $ and each $ c \le y < d $,
the estimate $ \locerr{\myfun,y,h} = \Landauno{h^{\myqq+1}} $ as $ h \to 0 $ holds.
A \msm is by definition of maximal order $ \myp \ge 1 $ if it is of order $ \myqq = \myp $ and not of order $ \myqq= \myp+1 $.
\end{myenumerate}
\begin{example}
\begin{myenumerate}
\item
\label{example-msm-order-repeated-quadrature-rules}
Each \msm of the special form \refeq{quad-ansatz} is clearly nullstable.
The \consistenceorder of this \msm is at least $ p = m-\mu + 1 $.  
The maximal order $ \myp $ may be larger in some cases.
For example, for $ \myq = 2, \, \mu = 0 $ and $ m = 2 $ (the Simpson's rule from  
the class of Milne--Simpson methods), the 
 \maxconsistenceorder is $ \myp = 4 $.
For those values of $  \myq $ and $ \mu $, the \ksvs coincide for $ m = 2 $ and $ m = 3 $ in fact.

\item The BDF methods are nullstable for $ 1 \le m \le 6 $, with respective maximal order $ \myp = m $.
\end{myenumerate}
\label{th:example-msm-order}
\end{example}
Next we consider the local truncation error \refeq{locerr}
on variable intervals $ \interval{c}{d} = \interval{\xmin}{\xn} $
and present uniform estimates.
As a preparation we introduce for $ \myqq \ge 0 $ and $ \myL \ge 0 $ the space
$ \Cpl{\myqq}[c,d] $ of functions $ u : \interval{c}{d} \to \reza $ that are $ \myqq $-times differentiable and in addition satisfy 
$ \modul{u^{(\myqq)}(y_1) -u^{(\myqq)}(y_2) } \le \myL \modul{y_1-y_2} $ for $ y_1, y_2 \in \interval{c}{d} $. Occasionally we also use the notation 
\begin{align*}
\UCpl{\myqq}[c,d] = \mycup_{\myL>0} \Cpl{\myqq}[c,d].
\end{align*}
\begin{lemma}
\label{th:locerr-estimate}
Consider a linear \msm \refeq{multstep-def} of \maxconsistenceorder $ \myp \ge 1 $
for solving the initial value problem \refeq{ivp-simple}.
Then for each Lipschitz constant $ \myL>0 $ and each 
$ 1 \le \myqq \le \myp $, the following estimate for the local truncation error holds:
\begin{align}
\locerr{\myfun,y,h} = \Landauno{h^{\myqq+1}}
\quad \text{as } h = \tfrac{\xmax-\xmin}{N} \to 0,
\label{eq:locerr-estimate}
\end{align}
uniformly for $ n, \myfun $ and $ y $ satisfying
$ m \le  n \le N, \myfun \in \Cpl{\myqq-1}[\xmin,\xn] $,
and $ \xmin \le y \le \xn -m h $.
\end{lemma}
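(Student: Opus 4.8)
The plan is to reduce \refeq{locerr-estimate} to the classical order conditions of the scheme \refeq{multstep-def} together with a Taylor expansion of the two ingredients of the local truncation error \refeq{locerr}. Fix a Lipschitz constant $\myL>0$, an integer $1\le\myqq\le\myp$, an index $n$ with $\emm\le n\le\N$, a function $\myfun\in\Cpl{\myqq-1}[\xmin,\xn]$ and a point $y$ with $\xmin\le y\le\xn-\emm h$, so that all nodes $y+jh$ with $0\le j\le\emm$ lie in $\interval{\xmin}{\xn}$. Since $\prim{\myphi}=\myfun$ on $\interval{\xmin}{\xn}$ and $\myfun$ is $(\myqq-1)$ times differentiable with an $\myL$-Lipschitz $(\myqq-1)$st derivative, the antiderivative $\myphi$ is $\myqq$ times continuously differentiable there, with $\ableit{\myphi}{\myqq}=\ableit{\myfun}{\myqq-1}$, and this top derivative is again $\myL$-Lipschitz. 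First I would Taylor-expand $\myphi(y+jh)$ about $y$ up to order $\myqq$ and $\myfun(y+jh)=\prim{\myphi}(y+jh)$ about $y$ up to order $\myqq-1$, each with the integral form of the remainder, and substitute the result into \refeq{locerr}. Collecting the polynomial contributions gives the expression
\[
\sum_{i=0}^{\myqq}\frac{\ableit{\myphi}{i}(y)}{i!}\,h^i\Bigl[\,\sum_{j=0}^{\emm}\mya_j\, j^i-i\sum_{j=0}^{\emm}\myb_j\, j^{i-1}\,\Bigr],
\]
while the remaining part of $\locerr{\myfun,y,h}$ is a fixed linear combination, with coefficients $\mya_j$ and $h\,\myb_j$, of the Taylor remainders of $\myphi$ and of $\myfun$ at the nodes $y+jh$.

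The next step is to recognize each bracket above as the $i$-th order condition of the scheme \refeq{multstep-def} for the elementary problem \refeq{ivp-simple} --- for $i=0$ this is the consistency relation $\sum_{j}\mya_j=0$ --- and to use that their vanishing for $i=0,1,\ldots,\myqq$ is well known to be equivalent to the method having order at least $\myqq$; see the standard references on linear \msms cited above, or verify it by applying the definition of the order to the monomial $\myfun(\vary)=\mon{\myqq-1}$, for which $\locerr{\myfun,y,h}$ is a polynomial in $h$ of degree at most $\myqq$ whose $h^i$-coefficient equals the $i$-th bracket above times $\ableit{\myphi}{i}(y)/i!$, so that the $\Landauno{h^{\myqq+1}}$-property of this polynomial forces every bracket to vanish. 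Since the method is of maximal order $\myp$ and $\myqq\le\myp$, all these brackets are zero, so the polynomial contribution drops out entirely and $\locerr{\myfun,y,h}$ reduces to the combination of Taylor remainders alone.

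It then remains to estimate those remainders, which can be done using nothing beyond the $\myL$-Lipschitz continuity of $\ableit{\myphi}{\myqq}=\ableit{\myfun}{\myqq-1}$: the degree-$\myqq$ Taylor remainder of $\myphi$ at $y+jh$ is at most $\myL(jh)^{\myqq+1}/(\myqq+1)!\le\myL(\emm h)^{\myqq+1}/(\myqq+1)!$, and the degree-$(\myqq-1)$ remainder of $\myfun$ there is at most $\myL(jh)^{\myqq}/\myqq!\le\myL(\emm h)^{\myqq}/\myqq!$. Hence
\[
\modul{\locerr{\myfun,y,h}}\le\Bigl(\,\sum_{j=0}^{\emm}\modul{\mya_j}\Bigr)\frac{\myL(\emm h)^{\myqq+1}}{(\myqq+1)!}+h\Bigl(\,\sum_{j=0}^{\emm}\modul{\myb_j}\Bigr)\frac{\myL(\emm h)^{\myqq}}{\myqq!}\le C\,\myL\,h^{\myqq+1},
\]
with a constant $C$ that depends only on $\emm$, $\myqq$ and the coefficients $\mya_j$, $\myb_j$ of the method, and in particular not on $n$, on $\myfun$ or on $y$. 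This is exactly \refeq{locerr-estimate} together with the asserted uniformity over $\emm\le n\le\N$, $\myfun\in\Cpl{\myqq-1}[\xmin,\xn]$ and $\xmin\le y\le\xn-\emm h$.

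The point that requires some care is one of sharpness rather than of difficulty: the estimate is claimed under the hypothesis $\myfun\in\Cpl{\myqq-1}[\xmin,\xn]$, that is, with one derivative fewer than a naive full Taylor argument would ask for. It works precisely because the order conditions annihilate the degree-$\myqq$ Taylor polynomial of $\myphi$, so that what survives is controlled by the Lipschitz modulus of $\ableit{\myphi}{\myqq}$, which contributes a factor $\Landauno{h^{\myqq+1}}$. Writing the Taylor remainders in this Lipschitz form, rather than in terms of a possibly non-existent $(\myqq+1)$st derivative, and then checking that every constant generated along the way is genuinely independent of $n$, $\myfun$ and $y$, is the only slightly delicate part; the rest is routine bookkeeping.
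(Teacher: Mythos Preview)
Your proof is correct and follows essentially the same route as the paper's: Taylor-expand $\myphi$ to order $\myqq$ and $\myfun$ to order $\myqq-1$, invoke the order conditions (the paper's ``consistency equations'') to annihilate the polynomial part, and bound the surviving remainders via the $\myL$-Lipschitz property of $\ableit{\myphi}{\myqq}=\ableit{\myfun}{\myqq-1}$. The paper uses the Lagrange-type remainder $R(f,\myqq,y,h)=\tfrac{\ableit{f}{\myqq}(\xi)-\ableit{f}{\myqq}(y)}{\myqq!}h^{\myqq}$ and leaves the constants implicit, whereas you use the integral remainder and track constants explicitly; this is a cosmetic difference only.
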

\begin{proof}
A Taylor expansion of a function $ f \in \Cpl{\myqq}[c,d] $ on an interval $ \interval{c}{d} $ gives, for $ c \le y $ and $ y + h < d $, 
the following representation for the remainder:
$ R(f,p,y,h) \defeq f(y+h) - \mysumtxt{s=0}{p} \tfrac{\ableit{f}{s}(y)}{s!} h^k
= \tfrac{\ableit{f}{p}(\xi) -\ableit{f}{p}(y)}{p!} h^p $.
This means 
$ R(f,p,y,h) = \Landauno{h^{p+1}} $ as $ h > 0, h \to 0 $,
uniformly for arbitrary finite intervals $ \interval{c}{d} $, for $ f \in \Cpl{\myqq}[c,d] $, and $ c \le y < d $.
After these preparations we now consider the special situation in the lemma.
From appropriate Taylor expansions for 
$ \myfun $ and $ \myphi $, and making use of the 
consistency equations corresponding to the \msm
\refeq{multstep-def} for solving \refeq{ivp-simple},
we finally arrive at
$ \locerr{\myfun,y,h} =
\mysumtxt{j=0}{\emm} \mya_j  R_j -
h \mysumtxt{j=0}{\emm} \myb_j \Rtil_j
= \Landauno{h^{\myqq+1}}
$,
uniformly for $ n, \myfun $ and $ y $ as given in the statement of the lemma,
where
$ R_j = R(\myphi,p,y,jh) $ and
$ \Rtil_j = R(\myfun,p-1,y,jh) $.
\end{proof}

\bigskip \noindent
We note that the considered intervals $ [\xmin,\xn] $ in Lemma \ref{th:locerr-estimate}
depend on $ h $, and we do not require $ \xn $ to be fixed. This causes no problem
in \refeq{locerr-estimate}, however, since the estimates of the local truncation error are considered uniformly there.

The basic convergence result in \msm theory is as follows: each nullstable linear \msm \refeq{multstep-def} of \consistenceorder $ \myqq \ge 1 $ is convergent of order $ \myqq $. Details are given in Section \ref{global_error_expansion}.

\subsection{Reflected coefficients / polynomials}
As a preparation we introduce some more notation.
We assume that at least one of the coefficients on the right-hand side of 
\refeq{multstep-def} does not vanish, and we identify the leading nonvanishing coefficient
then:
let $ 0 \le \mymu \le m $ such that
\begin{align} 
b_{m-\mu+1} = \cdots = b_{m-1} =  b_m = 0, \quad b_{m-\mu} \neq 0.
\label{eq:sigma_zero_roots}
\end{align}
In the sequel we make use of a relation between linear difference equations and discrete convolution equations. As a preparation we
consider infinite sequences of reflected coefficients
$ (\alpha_j)_{j \ge 0} $
and $ (\beta_j)_{j \ge 0} $
of the \msm under consideration:
\begin{align}
\alpha_j = \left\{
\begin{array}{rl}
\mya_{m-j}, & j \le m, \\
0, & j > m,
\end{array}
\right.
\qquad
\beta_j = \left\{
\begin{array}{rl}
\myb_{m-\mu-j}, & j \le m-\mu, \\
0, & j > m+\mu.
\end{array}
\right.
\label{eq:alp-bet-def}
\end{align}
In addition we introduce sequences
$ \alpinv_0, \alpinv_1, \dots $
and 
$ \mygamma_0, \mygamma_1, \ldots $
by the following discrete convolution equations:
\begin{align}
\sum_{s=0}^r \alpha_{r-s} \alpinv_s = \delta_{0r},
\qquad
\sum_{s=0}^r \alpha_{r-s} \mygamma_s = \beta_{r},
\for r = 0, 1, \ldots,
\label{eq:gamma-def}
\end{align} 
where $ \delta_{0r} $ denotes the Kronecker symbol, \ie
we have $ \delta_{00} = 1 $ and
$ \delta_{0r} = 0 $ for each $ r \neq 0 $.
There is a relation between those discrete convolutions 
and the products of the associated (formal) power series: for
\begin{align}
\alpha(\xi) = \mysum{n=0}{m} \alpha_n \myxi^n, \quad 
\beta(\xi) = \mysum{n=0}{m-\mu} \beta_n \myxi^n, \quad 
\gamma(\xi) = \mysum{n=0}{\infty} \gamma_n \myxi^n,
\label{eq:fps-abc}
\end{align}
we have
\begin{align*}
\mfrac{1}{\alpha\klasm{\myxi}} = \mysum{n=0}{\infty} \alpinv_n \myxi^n,
\quad
\alpha(\xi) \gamma(\xi) = \beta(\xi).
\end{align*}
In addition, there is a relation between products of (formal) power series considered in \refeq{fps-abc} on one side and the products of associated semicirculant matrices on the other side. This relation will be tacitly used in the sequel. For an introduction to that topic, see, \eg 
\mycitea{Henrici}{74}.

It follows from \refeq{gamma-def} and standard results for difference equations
(see, \eg Lemma 5.5 on p.~242 in \mycitea{Henrici}{62})
that a nullstable \msm satisfies
\begin{align} 
\alpinv_n = \Landauno{1},
\quad
\mygamma_n = \Landauno{1} \as n \to \infty.
\label{eq:gamman-bounded}
\end{align} 
In the stability analysis to be considered, the coefficients of the inverse \fps
$ 1/\beta\kla{\myxi} $ and $ 1/\gamma\kla{\myxi} $ also play a significant role.
Their behavior will be considered later. 
\subsection{A global error representation}
\label{global_error_expansion}
We next present a global error representation in terms of linear combinations of local truncation errors, as well as the starting errors. This representation will be crucial in the subsequent analysis.
\begin{lemma}
Consider a nullstable linear \msm \refeq{multstep-def} 
for solving the initial value problem \refeq{ivp-simple}.
Then we have the error representation
\begin{align}
\myphi_n = \myint{\myfun}{\xn} - 
\sum_{s=0}^{n-m} \alpinv_{n-m-s} \locerr{\myfun,\xs,h} + R, \qquad 
\modul{R} \le C \max_{0 \le r \le m-1} \modul{ \myphi_\myr - \myphi(x_\myr)}, 
\label{eq:ode_global_error_expansion}
\end{align}
where $ n \ge m $, and
$ \locerr{\myfun,\xs,h} $ denotes the local truncation error at the \gridpoint $ \xs $,
\cf \refeq{locerr}.
The constant $ C $ in \refeq{ode_global_error_expansion}
depends on $ m $ and the 
bounds for $ (\alpha_n)_{n\ge 0} $ and $ (\alpinv_n)_{n\ge 0} $ only.
\label{th:ode_global_error_expansion}
\end{lemma}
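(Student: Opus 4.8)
The plan is to turn the linear multistep recursion \refeq{multstep-def} into a discrete convolution equation and then invert it using the sequences $\alpinv_n$ introduced in \refeq{gamma-def}. First I would write the exact identity $\sum_{j=0}^m \mya_j \myphi(\xn[r+j]) - h\sum_{j=0}^m \myb_j \myfun_{r+j} = \locerr{\myfun,\xr,h}$ for $r = 0,\ldots,n-m$, which is just the definition \refeq{locerr} of the local truncation error evaluated at the \gridpoint $\xr$ (note $\myfun_{r+j} = \myfun(\xn[r+j])$). Subtracting the scheme \refeq{multstep-def} from this identity, the $h\sum \myb_j \myfun_{r+j}$ terms cancel, leaving $\sum_{j=0}^m \mya_j \bigl(\myphi(\xn[r+j]) - \myphi_{r+j}\bigr) = \locerr{\myfun,\xr,h}$ for $r = 0,\ldots,n-m$. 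Setting $e_s \defeq \myphi(\xs) - \myphi_s$ and reindexing via the reflected coefficients $\alpha_j = \mya_{m-j}$ from \refeq{alp-bet-def}, this reads $\sum_{j=0}^m \alpha_{m-j} e_{r+j} = \locerr{\myfun,\xr,h}$, i.e. $\sum_{\ell=0}^{m}\alpha_\ell\, e_{r+m-\ell} = \locerr{\myfun,\xr,h}$.

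Next I would split off the contributions of the $m$ starting errors $e_0,\ldots,e_{m-1}$. Write $e_{r+m} = -\sum_{\ell=1}^m \alpha_\ell e_{r+m-\ell} + \locerr{\myfun,\xr,h}$ (using $\alpha_0 = \mya_m \neq 0$, so \mywlog normalized to $1$). This is a linear difference equation for $(e_s)_{s\ge m}$ driven by the inhomogeneity $\locerr{\myfun,\xr,h}$ with initial data $e_0,\ldots,e_{m-1}$. Its solution by the discrete variation-of-constants formula is $e_n = \sum_{s=0}^{n-m}\alpinv_{n-m-s}\locerr{\myfun,\xs,h} + (\text{a linear combination of }e_0,\ldots,e_{m-1})$, where $\alpinv_j$ is precisely the fundamental solution, i.e. the inverse-convolution sequence defined by $\sum_{s=0}^r \alpha_{r-s}\alpinv_s = \delta_{0r}$ in \refeq{gamma-def}. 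To make this rigorous I would verify by a direct convolution computation that the right-hand side satisfies the difference equation and matches the initial data; the coefficients multiplying $e_0,\ldots,e_{m-1}$ are finite sums of products of $\alpha_j$'s and $\alpinv_j$'s, hence bounded in absolute value by a constant $C$ depending only on $m$ and the bounds for $(\alpha_n)$ and $(\alpinv_n)$ — which is exactly the claimed dependence. Since $\myphi_n = \myphi(\xn) - e_n = \myint{\myfun}{\xn} - e_n$ (recall $\myphi(\xn) = \myint{\myfun}{\xn}$ from \refeq{ivp-simple}), substituting gives \refeq{ode_global_error_expansion} with $R = -(\text{the starting-error combination})$ and $\modul{R}\le C\max_{0\le r\le m-1}\modul{\myphi_r - \myphi(\xr)}$.

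The only genuinely delicate point is the sign convention and the precise bookkeeping in the reindexing between the forward coefficients $\mya_j,\myb_j$ in \refeq{multstep-def} and the reflected coefficients $\alpha_j$ in \refeq{alp-bet-def}; I would do this carefully on a small example ($m=1$ or $m=2$) first to fix the orientation, then state the general case. The convolution identities themselves are routine — they are the standard correspondence between difference equations and power-series inversion already invoked in the text around \refeq{fps-abc} and \refeq{gamman-bounded} — and nullstability is used only at the very end, implicitly, to guarantee that the constant $C$ is genuinely finite via the boundedness $\alpinv_n = \Landauno{1}$ from \refeq{gamman-bounded}, though the statement as written keeps $C$ expressed directly in terms of those bounds rather than invoking nullstability a second time.
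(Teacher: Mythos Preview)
Your proposal is correct and is essentially the same argument as the paper's: set up the difference equation $\sum_{j=0}^m a_j e_{r+j} = \pm\locerr{\myfun,\xr,h}$ for the error sequence, separate the starting contributions $e_0,\ldots,e_{m-1}$, and invert the remaining convolution via the sequence $(\alpinv_n)$ from \refeq{gamma-def}. The only cosmetic difference is that the paper phrases the inversion as multiplying by the explicit lower-triangular semicirculant matrix with entries $\alpinv_{n-m-s}$, whereas you phrase it as the discrete variation-of-constants formula; these are two names for the same computation, and your bound on $R$ in terms of $m$ and the bounds on $(\alpha_n),(\alpinv_n)$ matches the paper's exactly.
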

\begin{proof}
Let $ e_r = \myphi_\myr - \myphi(x_\myr) $ for $ r = \myseqq{0}{1}{n} $. 
We have $ \mysumtxt{j=0}{m} a_j e_{r+j} = g_r $ for $ r = \myseqq{0}{1}{n-m} $, 
where $ g_r \defeq -\locerr{\myfun,\xr,h} $.
A reformulation gives 
$ \mysumtxt{i=0}{r} \alpha_{r-i} e_{i+m} = g_r - \mysumtxt{i=r-m}{-1} \alpha_{r-i} e_{i+m}
$
for $  r = \myseqq{0}{1}{n-m} $,
which in matrix formulation reads as follows:
\begin{align*}
\left(
\begin{array}{c@{\quadsm}c@{\quadsm}c@{\quadsm}c@{\quadsm}c@{\quadsm}c@{\quadsm}c}
\alpha_0 & 0 & \cdots & \cdots & \cdots & \cdots & 0 \\
\alpha_1 & \alpha_0 & \ddots & & & & \vdots \\ 
\vdots & \ddots & \ddots & \ddots &  && \vdots \\
\alpha_m & & \ddots & \ddots & \ddots & & \vdots \\
0 & \ddots & & \ddots & \ddots & \ddots &\vdots \\
\vdots & \ddots & \ddots & & \ddots & \ddots & 0 \\
0 & \cdots & 0 &  \alpha_m & \cdots & \alpha_1 & \alpha_0 \\
\end{array} \right) \cdott 
\left(\begin{array}{c}
e_m \\ e_{m+1} \\
\vdots \\
\vdots \\
\vdots \\
\vdots \\
e_n 
\end{array} \right)
=
\left(\begin{array}{c}
g_0 + \Landaubi{\max_{0 \le r \le m-1} \modul{e_r}} \\
\vdots \\
g_{m-1} + \Landaubi{\max_{0 \le r \le m-1} \modul{e_r}} \\
g_m \\
\vdots \\
\vdots \\
g_{n-m} 
\end{array} \right).
\end{align*}
The desired result now follows from the fact that the inverse of the semicirculant system matrix is given by
\begin{align*}
\left(
\begin{array}{c@{\hspace{2mm}}c@{\hspace{2mm}}c@{\hspace{2mm}}c}
\alpinv_0 & 0 & \cdots & 0 \\
\alpinv_1 & \alpinv_0 & \ddots & \vdots \\ 
\vdots & \ddots & \ddots &  0 \\
\alpinv_{n-m} & \cdots & \alpinv_1  & \alpinv_0  \\
\end{array} \right).
\end{align*}
This completes the proof.
\end{proof}
\begin{remark}
It immediately follows from Lemmas \ref{th:locerr-estimate}
and \ref{th:ode_global_error_expansion}
as well from \refeq{gamman-bounded} that, 
under the conditions stated in those lemmas, we have
$ \myphi_n = \myint{\myfun}{\xn} + \Landauno{h^{\myqq}} $ for $ n \ge m $,
provided that the starting errors are $ \Landauno{h^{\myqq}} $.
This result, however, does not allow optimal error estimates for the approximate inversion of Volterra integral equations of the first kind to be considered in this paper, so we make use of
\refeq{ode_global_error_expansion} instead.
We note that in the papers by Wolkenfelt (\mynocitea{Wolkenfelt}{79}, \mynocitea{Wolkenfelt}{81}), a global error expansion with an integral representation is used for the inversion process to obtain best possible error estimates.
The latter approach, however, requires 
stronger smoothness assumptions on the involved functions
than our 
approach based on \refeq{ode_global_error_expansion} does.
\remarkend
\end{remark}
\subsection{Explicit representation of the values $ \myphi_r $}
For the numerical analysis to be considered later on we need to express the
values $ \myseqq{\myphi_\mym}{\myphi_{\mym+1}}{\myphi_n} $
generated by the \msm
\refeq{multstep-def}
in terms of the numbers $ \myfuns $
and the starting values
$ \myseqq{\myphi_1}{\myphi_2}{\myphi_{\mym-1}} $
(as indicated, we always choose $ \myphi_0 = 0 $).
To simplify notation somewhat and to adapt our notation to the existing literature on the topic, we shall assume that the starting values are of the form
\begin{align}  
\myphi_r = h \mysum{s=0}{\myd-1} \wstart{rs} \myfuns,
\quad
\myr = \myseqq{1}{2}{m-1},
\label{eq:start_values_rep}
\end{align} 
where
$ \wstart{rs} \in \reza $ for $ \myr = \myseqq{1}{2}{\myd-1} $ and
$ s = \myseqq{0}{1}{\myd-1} $, are starting weights which are independent of $ h $ and which will be specified below.
We note that in \refeq{start_values_rep},
each starting value $ \myphi_{r} \  (1 \le r \le m-2 $) obviously may depend on future states, in general, which is rather unnatural for a Volterra type problem. 
Such an approach, however, allows sufficiently good accuracy of those starting values.

As a further preparation for Lemma \ref{th:phi_proofrep} considered below, we introduce weights needed in that lemma:
\begin{myenumerate}
\item Consider
\begin{align} 
\myomega_{ns} = \mygamma_{n-\mu-s} 
\quad  \text{for } m \le s \le n-\mu,
\quad  m + \mu \le n < \infty,
\label{eq:omega-nonstart-def}
\end{align} 
where the numbers $ \mygamma_{s} $ are given by \refeq{gamma-def}.

\item
For $ n \ge m $ we next consider starting weights $ \myomega_{ns}, 0 \le s \le m-1 $. For $ s $ fixed, they are
recursively determined by the following inhomogeneous discrete convolution equation, 
\begin{align}
\sum_{t=0}^n \alpha_{n-t} \myomega_{ts} 
= \left\{
\begin{array}{rl}
\beta_{n-\mu-s}, & n \ge \mu + s, \\
0, & n < \mu + s, \\
\end{array}
\right.
\qquad n = m, m+1, \ldots \ . 
\label{eq:omega-start-def}
\end{align} 
The weights introduced in \refeq{omega-nonstart-def}, \refeq{omega-start-def} are uniformly bounded in case of a nullstable method,
\ie
\begin{align} 
\sup_{m + \mu \le n < \infty \atop 0 \le s \le n-\mu} \modul{\myomega_{ns}} < \infty.
\label{eq:omega-bounded}
\end{align} 
This follows, similarly to \refeq{gamman-bounded},
\refeq{omega-nonstart-def},
from standard results for difference equations.
\end{myenumerate}
We are now in a position to represent the \msm \refeq{multstep-def} in quadrature form. Note that the 
numbers $ \myseqq{\myfun_0}{\myfun_1}{\myfun_{n-\mu}} $ considered in the following lemma do not necessarily coincide with the values of the previously considered function $ \myfun: \interval{\xmin}{\xn} \to \reza $ at the given \gridpoints.
\begin{lemma} 
Let 
 $ \myseqq{\myphi_1}{\myphi_2}{\myphi_{n}} $
and $ \myseqq{\myfun_0}{\myfun_1}{\myfun_{n-\mu}} $
 be arbitrary two sequences 
of real numbers satisfying
\refeq{start_values_rep}
and the \msm recurrence \refeq{multstep-def} 
with $ n \ge \emm+\mu $ and $ \myphi_0 = 0 $.
Then the following identity holds:
\begin{align}  
\myphi_{n} =
h \mysum{s=0}{n-\mu} \myomega_{ns} \myfuns,
\label{eq:phi_proofrep}
\end{align} 
where the weights $ \myomega_{ns} $ 
are given by \refeq{omega-nonstart-def} and \refeq{omega-start-def}.
\label{th:phi_proofrep}
\end{lemma}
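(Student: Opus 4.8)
The plan is to recast, with the reflected coefficients $(\alpha_j)$ and $(\beta_j)$ of \refeq{alp-bet-def}, \emph{both} the \msm recurrence \refeq{multstep-def} together with the starting representation \refeq{start_values_rep}, \emph{and} the defining relations \refeq{omega-nonstart-def}--\refeq{omega-start-def} of the weights $\myomega_{ks}$, as discrete convolution identities (equivalently, identities between lower-triangular semicirculant matrices), and then to compare them. For this I use the conventions $\myomega_{0s}:=0$; $\myomega_{rs}$ for $1\le r\le m-1$ and $0\le s\le m-1$ equal to the starting weights of \refeq{start_values_rep}, and $\myomega_{rs}:=0$ for $s\ge m$; and $\myomega_{ks}:=0$ whenever $s>k-\mu$. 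With these conventions $h\sum_{s\ge 0}\myomega_{ks}\myfuns$ is meaningful for every $k\ge 0$, and for $k\ge m+\mu$ it equals the right-hand side of \refeq{phi_proofrep}.

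First I would rewrite \refeq{multstep-def}: substituting $t=r+j$ on the left and $s=r+j$ on the right, relabelling $k:=r+m$ (so that $k$ runs from $m$ to $n$), and using $\alpha_j=a_{m-j}$, that $\alpha_j=0$ for $j>m$, the vanishing \refeq{sigma_zero_roots} of the last $\mu$ coefficients $b_j$, and $\beta_j=b_{m-\mu-j}$, one obtains (the vanishing coefficients allow the lower summation limits to be pushed down to $0$)
\begin{align*}
\sum_{t=0}^{k}\alpha_{k-t}\,\myphi_t \;=\; h\sum_{s=0}^{k-\mu}\beta_{k-\mu-s}\,\myfuns
\for m\le k\le n .
\end{align*}
Next I would establish the companion identity for the weights,
\begin{align*}
\sum_{t=0}^{k}\alpha_{k-t}\,\myomega_{ts} \;=\; \beta_{k-\mu-s}
\for k\ge m,\ s\ge 0 ,
\end{align*}
with $\beta$ at a negative index read as $0$. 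For $0\le s\le m-1$ this is \emph{verbatim} the recursion \refeq{omega-start-def} defining those columns (its branch for indices below $\mu+s$ being exactly the case of a negative index on the right). For $s\ge m$ it follows from $\myomega_{ks}=\mygamma_{k-\mu-s}$ in \refeq{omega-nonstart-def}, the vanishing of $\myomega_{ts}$ for $t<s+\mu$, and the convolution relation $\sum_{u=0}^{r}\alpha_{r-u}\mygamma_u=\beta_r$ contained in \refeq{gamma-def}, after a shift of the summation index by $s+\mu$; the subcases $k-\mu\ge s$ and $k-\mu<s$ (in which both sides vanish) are handled separately.

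With these two identities at hand, set $E_k:=\myphi_k-h\sum_{s\ge 0}\myomega_{ks}\myfuns$ for $0\le k\le n$. From $\myphi_0=0$ and \refeq{start_values_rep} one has $E_k=0$ for $0\le k\le m-1$. Subtracting $h\,\myfuns$ times the second identity, summed over $s$, from the first, the right-hand sides cancel and
\begin{align*}
\sum_{t=0}^{k}\alpha_{k-t}\,E_t \;=\; 0
\for m\le k\le n .
\end{align*}
Since $\alpha_0=a_m\neq 0$ and $E_t=0$ for $t\le m-1$, isolating the $t=k$ term and inducting on $k$ yields $E_k=0$ for all $m\le k\le n$; the case $k=n$ is the assertion \refeq{phi_proofrep}.

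The step I expect to be the main obstacle is the middle one, more precisely the bookkeeping of the two regimes of the second index of $\myomega$: the ``starting'' columns $0\le s\le m-1$, governed by the recursion \refeq{omega-start-def}, versus the ``convolution'' columns $s\ge m$, governed by the sequence $\mygamma$ through \refeq{omega-nonstart-def}. One has to make the various boundary conventions mesh --- $\myomega_{ks}=0$ for $s>k-\mu$, $\myomega_{0s}=0$, $\beta$ and $\mygamma$ vanishing at negative indices, and the two branches in \refeq{omega-start-def} --- so that a single convolution identity holds for all $k\ge m$. Once that is arranged, the first and last steps are short.
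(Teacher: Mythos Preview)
Your argument is correct. The rewriting of \refeq{multstep-def} in convolution form, the companion identity for the weights, and the induction on $E_k$ all go through as you describe; the only cosmetic wrinkle is that the convention ``$\myomega_{ks}=0$ whenever $s>k-\mu$'' should be read as applying for $k\ge m$ (for $1\le k\le m-1$ you want the starting weights to stand, and those may be nonzero for $s>k-\mu$), but you use it only in that range anyway.

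The paper's own proof is different and considerably shorter. It argues in two strokes: first, since both the starting rule \refeq{start_values_rep} and the recurrence \refeq{multstep-def} are linear in $(\myfun_0,\ldots,\myfun_{n-\mu})$, an induction on $n$ shows that $\myphi_n$ is a linear functional of that vector, so \emph{some} representation $\myphi_n=h\sum_{s=0}^{n-\mu}\myomega_{ns}\myfun_s$ exists; second, to read off the weights one evaluates this at the standard basis vectors of $\reza^{n-\mu+1}$, which immediately yields the recursions \refeq{omega-nonstart-def}--\refeq{omega-start-def}. In effect, the paper proves existence abstractly and then identifies, whereas you take the given formulas for $\myomega_{ns}$ and verify directly that they reproduce $\myphi_n$. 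Your route is more explicit and self-contained (in particular, your ``companion identity'' makes transparent why \refeq{omega-start-def} and \refeq{omega-nonstart-def} are the right definitions), at the cost of the index bookkeeping you flag; the paper's route is quicker but leaves those same details to the reader.
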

\begin{proof} It follows by induction that
a representation of the form \refeq{phi_proofrep} with some 
weights $ \myomega_{ns} $
exists in general.
The special representations of the weights given in
\refeq{omega-nonstart-def} and \refeq{omega-start-def}
are then obtained by considering
in \refeq{phi_proofrep}
the standard basis of $ \reza^{n-\mu+1} $ to represent
$ \myseqq{\myfun_0}{\myfun_1}{\myfun_{n-\mu}} $.
Details are omitted.
\end{proof}

\bigskip \noindent
A quadrature method \refeq{phi_proofrep} generated by 
a \msm \refeq{multstep-def}
with starting values as in
\refeq{start_values_rep}
is called $ (\varrho,\sigma) $-reducible;
see, e.g., 
\mycitebtwo{Brunner}{van der Houwen}{Brunner_Houwen[86]},
\mycitea{Taylor}{76} or
Wolkenfelt (\mynocitea{Wolkenfelt}{79}, \mynocitea{Wolkenfelt}{81}).
\subsection{A starting quadrature procedure}
For \msms \refeq{multstep-def} to solve the initial value problem \refeq{ivp-simple},
we next consider, for $ \emm \ge 2 $, the determination of starting values $ \myseqq{\myphi_1}{\myphi_2}{\myphi_{m-1}} $ of the form \refeq{start_values_rep} that 
have the approximation properties required in
Lemma~\ref{th:ode_global_error_expansion}.
A standard procedure
is presented in the following example.
\begin{example}
\label{th:interpolatory-start-weights}
We consider \refeq{start_values_rep} for fixed $ \myr \in \inset{\myseqq{1}{2}{\mym}} $ with $ \mym \ge 1 $. The case $ r = m $ is not considered there in fact but this will be needed for the computation of 
initial approximations to the solution of the
Volterra integral equation of the first kind \refeq{volterra-inteq}
considered below. Note also that in the case $ r = m $ there is a notational conflict
with \refeq{omega-bounded}, for $ n = m $ there. We will take care of this in every application.

We now consider
an interpolatory quadrature method for the integral
$ \int_{\xmin}^{\xr} \myfun(y) dy $ using interpolation nodes
$ \myseqq{\xn[0]}{\xn[1]}{\xn[\myd-1]} $.
This in fact means that
the resulting quadrature scheme 
$ \myphi_r = \linebreak  h \mysumtxt{s=0}{\myd-1} \wstart{rs} \myfun(\xs)
\approx \int_{\xmin}^{\xr} \myfun(y) dy $
is exact for all polynomials $ \myfun $ of degree $ \le \myd-1 $, with quadrature weights
that are given by the following nonsingular linear system of equations:
\begin{align}
\left( \begin{array}{c@{\quad}c@{\ \ }c@{\ \ }c@{\ \ }c}
1 & 1 & 1 & \cdots & 1 \\
0 & 1 & 2 & \cdots & \myd-1 \\
0 & 1 & 4 & \cdots & \klasm{\myd-1}^2 \\
0 & 1 & 9 & \cdots & \klasm{\myd-1}^3 \\
\vdots & \vdots & \vdots & & \vdots \\
0 & 1 & 2^{\myd-1} & \cdots & \klasm{\myd-1}^{\myd-1}
\end{array} \right)
\left( \begin{array}{@{}c@{}}
\wstart{r0} \\
\wstart{r1} \\
\wstart{r2} \\
\vdots \\
\wstart{r,\myd-1}
\end{array}
\right)
=
\left( \begin{array}{@{}c@{}}
r \\
\lfrac{r^2}{2} \\
\lfrac{r^3}{3} \\
\vdots \\
\lfrac{r^\myd}{\myd}
\end{array}
\right).
\label{eq:start-weights-system-matrix}
\end{align}
We next study the error of this quadrature scheme, and for this purpose  
let $ 1 \le \myqq \le m $ and $ L > 0 $ be fixed.
For functions $ \myfun \in \Cpl{\myqq-1}[\xmin,\xm] $
and $ \pol \in \Pi_\emm $ with
$ \pol(\xs) = \myfun(\xs) $
for $ s = \myseqq{0}{1}{\mym-1} $,
we have
\begin{align}
\max \inset{\modul{\pol(y) - \myfun(y)} \mid \xmin \le y \le \xm}
= \Landauno{h^{\myqq}}
\label{eq:polapp-start}
\end{align}
uniformly \wrt the considered class of functions $ \myfun $. This follows from elementary interpolation theory:
for each $ y \not \in \inset{\myseqq{\xn[0]}{\xn[1]}{\xn[m-1]}} $
we have
$ \pol(y) - \myfun(y) = \myfun[\xn[0],\xn[1],\ldots,\xn[m-1],y] w(y) $, 
where 
the first factor on the right-hand side denotes a divided difference, and 
$ w(y) = (y-\xn[0]) \cdots (y-\xn[m-1]) $.
It follows by induction that
$ \modul{\myfun[\xn[0],\ldots,\xn[m-1],y]}
\le \kappa / h^{m-\myqq} $ holds, with the constant
$ \kappa = 2^{m-\myqq} L/ (m-1)! $.
This finally gives \refeq{polapp-start}. 

From \refeq{polapp-start}
we immediately obtain
$ \myphi_\myr -
\int_\xmin^{\xr} \myfun(y) dy
= \Landauno{h^{\myqq+1}} $
for $ \myr = \myseqq{1}{2}{\mym} $
uniformly \wrt the considered class of functions $ \myfun $.
Note that the assumption $ \myqq \le \mym $ made here is no serious restriction;
see Remark \ref{th:strong-root-cond-remarks} below for details.
Note also that the starting weights $ \wstart{rs} $
given by \refeq{start-weights-system-matrix}
do not depend on $ h $ and $ n $.
\end{example}
We summarize the results of
Lemma \ref{th:ode_global_error_expansion}, 
Lemma \ref{th:phi_proofrep} and Example \ref{th:interpolatory-start-weights}.
\begin{corollary}
Consider a nullstable linear \msm \refeq{multstep-def}, with $ n \ge \emm $.
Let the weights $ \omega_{ns} $ for $ n \ge m $ be given by
\refeq{omega-nonstart-def} and \refeq{omega-start-def},
with starting weights 
$ \omega_{ns} $ for $ n \le m-1 $ be given by
as in Example~\ref{th:interpolatory-start-weights}.
Then for each
$ 1 \le \myqq \le m $ and each Lipschitz constant $ L > 0 $ we have 
\begin{align*}
h \mysumtxt{s=0}{n-\mu} \myomega_{ns} \myfun(\xs)
= \myint{\myfun}{\xn} -
\sum_{s=0}^{n-m} \alpinv_{n-m-s} \locerr{\myfun,\xs,h}
+ \Landauno{h^{\myqq+1}}
\end{align*}
uniformly for $ n $ and $ \myfun $
satisfying $ m+\mu \le  n \le N $ and $ \myfun \in \Cpl{\myqq-1}[\xmin,\xn] $. 
\label{th:ode_msm_representation}
\end{corollary}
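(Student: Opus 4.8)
The plan is to combine Lemma~\ref{th:phi_proofrep}, Lemma~\ref{th:ode_global_error_expansion} and Example~\ref{th:interpolatory-start-weights}, since the corollary is essentially their juxtaposition. Fix $1 \le \myqq \le m$ and a Lipschitz constant $L > 0$, and apply the \msm~\refeq{multstep-def} for the initial value problem~\refeq{ivp-simple} with right-hand side $\myfun$; that is, take the grid values $\myfun_s = \myfun(\xs)$ for $0 \le s \le n-\mu$, the initial value $\myphi_0 = 0$, and the remaining starting values $\myphi_r = h \mysum{s=0}{m-1} \wstart{rs} \myfun(\xs)$ for $1 \le r \le m-1$, where the $\wstart{rs}$ are the interpolatory starting weights determined by \refeq{start-weights-system-matrix}. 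These starting values are of the form \refeq{start_values_rep}, and since $n \ge m + \mu$ by assumption, Lemma~\ref{th:phi_proofrep} applies and gives $\myphi_n = h \mysum{s=0}{n-\mu} \myomega_{ns} \myfun(\xs)$, with $\myomega_{ns}$ the weights from \refeq{omega-nonstart-def} and \refeq{omega-start-def}; for the present choice of the $\wstart{rs}$, the starting weights $\myomega_{ns}$ with $n \le m-1$ are precisely those of Example~\ref{th:interpolatory-start-weights}, as stipulated in the corollary.

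Since the \msm is nullstable, Lemma~\ref{th:ode_global_error_expansion} applies to this same sequence $\myphi_0, \ldots, \myphi_n$ and yields
\[
  \myphi_n = \myint{\myfun}{\xn} - \sum_{s=0}^{n-m} \alpinv_{n-m-s} \locerr{\myfun,\xs,h} + R,
  \qquad
  \modul{R} \le C \max_{0 \le r \le m-1} \modul{\myphi_r - \myphi(x_r)},
\]
where $\myphi$ denotes the solution of \refeq{ivp-simple} with $\myphi(\xmin) = 0$, so that $\myphi(x_r) = \myint{\myfun}{x_r}$ and in particular $\myphi_0 - \myphi(x_0) = 0$. The key point is that the constant $C$ depends only on $m$ and on the bounds for $(\alpha_n)_{n\ge 0}$ and $(\alpinv_n)_{n\ge 0}$, hence is independent of $n$ and of $\myfun$. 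Subtracting this representation from the quadrature identity of the previous paragraph, the corollary will follow once $R = \Landauno{h^{\myqq+1}}$ is established.

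For the starting errors I would invoke Example~\ref{th:interpolatory-start-weights}, whose hypotheses are met here: the restriction of $\myfun$ to $\interval{\xmin}{x_m}$ lies in $\Cpl{\myqq-1}[\xmin,x_m]$ with the same Lipschitz constant $L$, and $\myqq \le m$ holds by assumption. It then gives $\myphi_r - \myint{\myfun}{x_r} = \Landauno{h^{\myqq+1}}$ for $1 \le r \le m-1$, uniformly over the considered Lipschitz class; together with $\myphi_0 - \myphi(x_0) = 0$ this yields $\modul{R} = \Landauno{h^{\myqq+1}}$. Inserting this into the displayed representation and comparing with the quadrature identity proves the claimed formula. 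Uniformity in $n$ over $m + \mu \le n \le N$ is immediate, because the constant $C$ in Lemma~\ref{th:ode_global_error_expansion} is $n$-independent, the starting-error estimate involves only the fixed indices $r \le m-1$, and the local truncation errors $\locerr{\myfun,\xs,h}$ are kept unexpanded in the final formula and hence require no further estimate at this point.

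The argument is thus mostly bookkeeping, and I do not anticipate a genuine obstacle; the one point that requires care is that the test intervals $\interval{\xmin}{\xn}$ vary with $h$ and that $\xn$ need not be fixed, so one must work with the versions of Lemma~\ref{th:ode_global_error_expansion} and of the interpolation estimate behind Example~\ref{th:interpolatory-start-weights} that are uniform in this respect --- which is precisely how they were formulated. The restriction $\myqq \le m$ enters only in the last step, reflecting that an interpolatory starting rule built on $m$ nodes cannot reach local order beyond $h^{m+1}$; this is the limitation already noted in Example~\ref{th:interpolatory-start-weights}.
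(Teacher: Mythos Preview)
Your proposal is correct and follows precisely the route the paper indicates: the corollary is stated as a summary of Lemma~\ref{th:ode_global_error_expansion}, Lemma~\ref{th:phi_proofrep} and Example~\ref{th:interpolatory-start-weights}, and you combine these three ingredients in exactly the intended way, with the appropriate attention to uniformity in $n$ and in $\myfun$. The only cosmetic point is that the two displayed expressions for $\myphi_n$ are to be equated rather than ``subtracted'', but your subsequent sentence makes clear that this is what you mean.
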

\section{Linear \msms for perturbed first kind Volterra integral equations}
\label{msm-volterra-first-kind}
\subsection{Some preparations}
We now return to the first kind Volterra integral equation \refeq{volterra-inteq}. For the numerical approximation we consider this equation at equidistant nodes
$ \xn = \xminpl \n \mydeltax, \, n = 1, 2, \ldots,\N $ with 
$ \mydeltax = \tfrac{\diffxmaxxmin}{\N} $, \cf\refeq{grid-points}. 
For each $ n = m, m+1, \ldots,\N $,
the resulting integral
$ \myint{\myfun}{\xn} = 
\intstxt{\xmin}{\xn}{\myfun(y)}{dy} $
with $ \myfun(y) =  k\klasm{\xn,y} u\klasm{y} $ for $ \intervalargno{y}{\xmin}{\xn} $
is approximated by the \msm
\refeq{multstep-def} 
under consideration.

In the sequel we suppose that the \rhs of equation \refeq{volterra-inteq} is only approximately given, with
\begin{align}
\modul{ \fndelta - f\klasm{\xn} } \le \delta
\for n = \myseqq{1}{2}{\nmax},
\label{eq:rhs-assump}
\end{align}
where $ \delta > 0 $ is a known noise level.

For the main convergence results 
we impose the following conditions.
\myassump{
For the Volterra integral equation \refeq{volterra-inteq} of the first kind and a given
\ksv with $ m \ge 1 $ (see \refeq{multstep-def}), we introduce the following
assumptions and notations.
\begin{myenumerate_indent}
\item 
\label{item:order-p-m}
The considered \ksv 
with $ m \ge 1 $ is nullstable and has \maxconsistenceorder $ 1 \le \myp \le m $.

\item
\label{item:sigma-strong-root-condition}
The second \generating polynomial
\begin{align}
\sigma\klasm{\xi} \defeq b_{m-\mu} \xi^{m-\mu} + b_{m-\mu-1} \xi^{m-\mu-1} + \dots + b_0,
\label{eq:msm-second-gener-pol}
\end{align}
with $ \mu $ as in \refeq{sigma_zero_roots}, 
is a \schurpolynomial:
\begin{align}
\sigma(\xi) = 0 \Longrightarrow \vert \xi \vert < 1 \qquad (\xi \in \koza),
\label{eq:sigma-strong-root-condition}
\end{align}
\ie all roots of the polynomial $ \sigma $ belong to the open unit disk.

\item
\label{item:assump-u}
There exists a solution $ u: \interval{\xmin}{\xmax} \to \reza $ to the integral equation
\refeq{volterra-inteq}, with $ u \in \UCpl{\myqq-1}[\xmin,\xmax] $ 
for some $ 1 \le \myqq \le \myp $
(for the definition of the considered function space, see Section \ref{order}).

\item
\label{item:assump-k-smooth}
For some integer $ \Nmin \ge m $ and $ \hmax = \tfrac{\diffxmaxxmin}\Nmin $,
the kernel function satisfies
$ k \in C^{\myqq}(\eset) $, where
$ \eset = \inset{(x,y) \mid \xmin \le y \le x \le \xmax \ \text{or} \ 
\xmin \le x,y \le \xminpl m \hmax} $.

\item
\label{item:assump-k=1}
There holds $ k\klasm{\varx,\varx} = 1 $ for each $ \intervalarg{\varx}{\xmin}{\xmax}
$. 

\item
\label{item:grid-points}
For a given \stepsize $ h = \tfrac{\diffxmaxxmin}{\nmax} $ with some integer $ \nmax \ge \Nmin $, let
$ \myseqq{\xn[0]}{\xn[1]}{\xn[\nmax]} $ be uniformly distributed \gridpoints 
given by \refeq{grid-points}.

\item
The values of the \rhs of equation \refeq{volterra-inteq}
are approximately given by \refeq{rhs-assump}.

\end{myenumerate_indent}
}{msm-assump}

Next we give some comments on the \schurpolynomial property considered in 
item \ref{item:sigma-strong-root-condition} of Assumption~\ref{th:msm-assump}.
\begin{remark}
\label{th:strong-root-cond-remarks}
\begin{myenumerate}
\item
In the stability analysis to be considered, the coefficients of the inverse \fps
\begin{align}
\mfrac{1}{\beta\klasm{\myxi}}
=
\mysum{n=0}{\infty} \betinv_n \myxi^n,
\qquad
\mfrac{1}{\gamma\klasm{\myxi}}
=
\mfrac{\alpha\klasm{\myxi}}{\beta\klasm{\myxi}}
=
\mysum{n=0}{\infty} \gaminv_n \myxi^n,
\label{eq:betinv-gaminv-def}
\end{align}
of the \genfuncs
$ \beta\kla{\myxi} $ and
$ \gamma\kla{\myxi} $, respectively
(see \refeq{fps-abc}),
play a significant role.
The \schurpolynomial condition \refeq{sigma-strong-root-condition} implies that
$ \lfrac{1}{\beta\klasm{\myxi}} $
is analytic in an open set of the complex plane that contains a disk $ \inset{\xi \in \koza \mid \modul{\xi} \le R } $ for
some $ R > 1 $, and Cauchy's integral theorem then implies that
the coefficients $ \betinv_n $ in \refeq{betinv-gaminv-def}
decay exponentially, \ie
\begin{align}
\betinv_n = \Landauno{\tau^{n}} \as n \to \infty
\ \textup{for some  } 0 < \tau < 1,
\label{eq:betainv-decay}
\end{align}
with $ \tau = 1/R $ in fact. From this 
$ \mygammainvnmu = \Landauno{\tau^{n}} $ as $ n \to \infty
$ follows easily.

\item
It is easy to see that for the $ \emm$-step Adams--Bashfort methods with $ 1 \le m \le 3 $, and the $ \emm $-step Nystr\"{o}m method with $ 2 \le m \le 3 $ as well,
the second characteristic polynomial $ \sigma $ is a \schurpolynomial (see condition \refeq{sigma-strong-root-condition}), respectively.
In addition, \refeq{sigma-strong-root-condition} is obviously satisfied by the BDF methods.

\item
The \schurpolynomial condition \refeq{sigma-strong-root-condition} is violated for each \msm of class \refeq{quad-ansatz}
with $ \mu = 0 $ (the implicit case) and with
\maxconsistenceorder $ \myp > m $.
More generally, it is an essential observation made by \myciteb{Gladwin}{Jeltsch}{74} 
that the second \genpol $ \sigma $ is even not a simple von Neumann polynomial
in that situation, with the case $ \mym = \myq = 1 $ (the repeated trapezoidal rule) as an exception. In addition,
the associated scheme for solving Volterra integral equations of the first kind introduced below is necessarily divergent then, in general.
For the mentioned exception $ \mym = \myq = 1 $, the associated second \genpol is obviously a simple von Neumann polynomial but not a \schurpolynomial.

As a consequence of the observations made in the beginning of part (c) of this remark, in
the special situation $ \mu = 0 $ in the local quadrature approach \refeq{quad-ansatz},
it is no loss of generality to restrict the considerations to \ksvs of \maxconsistenceorder $ 1\le \myp \le m $
(see item \ref{item:order-p-m} of Assumption \ref{th:msm-assump}). 

\item
We note that in Wolkenfelt (\mynocitea{Wolkenfelt}{79}, \mynocitea{Wolkenfelt}{81}), the second characteristic polynomial $ \sigma $ is required to be a von Neumann polynomial, not a Schur polynomial which is the assumption made in the present paper (see condition \refeq{sigma-strong-root-condition}). The latter assumption results in noise amplification terms which in general are smaller than for 
 Neumann polynomials $ \sigma $. Those terms in fact are, up to some factor, of the form $ \lfrac{\delta}{h} $. In addition, the Schur polynomial assumption on $ \sigma $ allows to use a proof technique which in part is much simpler than the elaborated technique used in \mynocitea{Wolkenfelt}{79}.
\remarkend
\end{myenumerate}
\end{remark}

\subsection{The numerical scheme}
We now consider, under the conditions given in Assumption \ref{th:msm-assump}, the following scheme for the numerical solution of
a Volterra integral equation \refeq{volterra-inteq}:
\begin{algorithm}
\label{th:msm-volterra-scheme}
\begin{myenumerate}
\item Determine $ \emm $ \inapps $ \undelta[s] \approx u(\xs) $ for $ s = 0, 1, \ldots, \emm-1 $ by solving the following linear system of $m$ equations,
\begin{align}
h \mysum{s=0}{m-1} \wstart{ns} k\kla{\xn,\xs} \undelta[s]
=
\fndelta, \qquad n = 1, 2, \ldots, m,
\label{eq:msm-noise-start-values}
\end{align}
where the starting weights $ \wstart{ns} $ are given by \refeq{start-weights-system-matrix}, with $ r $ replaced by $ n $ there.

\item Determine then recursively, with $ \mu $ given by \refeq{sigma_zero_roots},
approximations $ \undelta[n-\mu] \approx u(\xn[n-\mu]) $ for 
$ n = m+\mu, \ldots, \nmax $ with $ \nmax \ge \Nmin $ by the following scheme.

For $ n $ fixed and 
$ \myseqq{\undelta[m]}{\undelta[m+1]}{\undelta[n-\mu-1]} $ already being computed, the following steps have to be employed to determine $ \undelta[n-\mu] $:
\begin{mylist_indent}
\item Set $ \psidel[s] = k(\xn,\xs) \undelta[s] $ for $ s = 0, 1, \ldots, n-\mu-1 $,
\item set $ \phidel[0] = 0 $, and compute (for $ \emm \ge 2$) $ \phidel = h \mysumtxt{s=0}{\mym-1} \wstart{rs} \psidel $
for $ \myr = \myseqq{1}{2}{m-1} $, \cf \refeq{start_values_rep},
where the starting weights $ \wstart{rs} $ are given by \refeq{start-weights-system-matrix},

\item compute recursively $ \phidel[\myr + \emm] $
for $ \myr = \myseqq{0}{1}{n-\emm-1} $ 
by using
on the interval $ [\xmin,\xn ] $ the perturbed version of the \mss \refeq{multstep-def}:
\begin{align}
\mysum{j=0}{\emm} \mya_j  \phidel[\myr + j] =
h \mysum{j=0}{\emm-\mu} \myb_j \psidel[\myr + j]
\for  \myr \eq \myseqq{0}{1}{n-\emm-1},
\label{eq:multstep-def-noise}
\end{align}

\item set $ \phidel[n] = \fndelta $, 

\item compute $ \psidel[n-\mu] $ by using the 
identity
\refeq{multstep-def-noise}
for $ r = n-\emm $,
\item compute $ \undelta[n-\mu] = \psidel[n-\mu]/k(\xn,\xn[n-\mu]) $.
\remarkend
\end{mylist_indent}
\end{myenumerate}
\end{algorithm}
\begin{remark}
\label{th:msm-volterra-scheme-remark}
\begin{myenumerate}
\item
Note that due to \ref{item:assump-k=1} in Assumption \ref{th:msm-assump}, 
for $ h $ sufficiently small we have $ \modul{k\kla{\xn,\xn[n-\mu]}} \allowbreak \ge C > 0 $ for each $ n $, independently of $ h $.
Thus the numerical procedure considered above can 
in fact be used for the stable computation of $ \undelta[n-\mu] $.

\item
The scheme \refeq{msm-noise-start-values} results from 
the quadrature method considered in Example \ref{th:interpolatory-start-weights},
applied to the integral
$ \int_\xmin^{\xn} k(\xn,y)u(y) dy $ 
for $ n = \myseqq{1}{2}{m} $.

\item
It immediately follows from Lemma \ref{th:phi_proofrep} that the approximations obtained by Algorithm \ref{th:msm-volterra-scheme} satisfy
\begin{align}
h \mysum{s=0}{n-\mu} \myomega_{ns} k\kla{\xn,\xs} \cdott \undelta[s]
=
\fndelta, \qquad n = \myseqqsh{m+\mu}{m+\mu+1}{\nmax},
\label{eq:msm-noise-proofrep}
\end{align}
where the weights $ \omega_{ns} $
are given by \refeq{omega-nonstart-def} and \refeq{omega-start-def},
 respectively.
The representation \refeq{msm-noise-proofrep} will be used in the proof of the main result, 
\cf Theorem \ref{th:main-msm}. In addition, for \msms of the form \refeq{quad-ansatz}, those weights can also be easily computed in practice, and 
\refeq{msm-noise-proofrep} can then be used for the practical implementation of 
\refeq{multstep-def-noise}. For an illustration see Example \ref{th:num_example_3} below.

\item The considered numerical scheme in Algorithm \ref{th:msm-volterra-scheme}
is quite universal and can be simplified in special cases. 
For example, for the backward rectangular rule
(which is the 1-step BDF method) considered in part (b) of Example \ref{th:msm-examples}, 
an implementation of Algorithm \ref{th:msm-volterra-scheme} without the 
starting procedure considered in (a) there is possible.
This means, however, that no approximation $ \undelta[0] $ will be available then.
\remarkend
\end{myenumerate}
\end{remark}
\subsection{Uniqueness, existence and approximation properties
of the \inapps}
\label{integsolvec}
We now consider uniqueness, existence as well as the approximation
properties of the \inapps $ \myseqq{\undelta[0]}{\undelta[1]}{\undelta[\mym-1]} $.
In a first step we consider in more detail the corresponding linear system of equations
\refeq{msm-noise-start-values}. This system of equations can be written in the form
\begin{align}
& h
\myoverbrace{
\left( \begin{array}{@{\ }c@{\ \  }c@{\ \ }c@{\ \ }c@{\ }}
\wstart{10} k\klasm{x_1,x_0} &
\wstart{11} k\klasm{x_1,x_1} &
\cdots &
\wstart{1,\mym-1} k\klasm{x_1,x_{\mym-1}}
\\ [7mm]
\wstart{20} k\klasm{x_2,x_0} &
\wstart{21} k\klasm{x_2,x_1} &
\cdots &
\wstart{2,\mym-1} k\klasm{x_2,x_{\mym-1}}
\\ [7mm]
\vdots & \vdots & & \vdots
\\ [7mm]
\wstart{\mym0}  k\klasm{x_\mym,x_0} &
\wstart{\mym1} k\klasm{x_\mym,x_1} &
\cdots &
\wstart{\mym,\mym-1} k\klasm{x_\mym,x_{\mym-1}}
\end{array} \right)}{\dis =: \ \Sh}
\left( \begin{array}{@{\ }c@{\ }}
\undelta[0] \\[1mm] \undelta[1] \\[1mm] \vdots \\[1mm]  \undelta[\mym-1]\end{array} \right)
=
\left( \begin{array}{@{\ }c@{\ }}
f_1^\delta \\[1mm] f_2^\delta \\[1mm] \vdots \\[1mm] f_{\mym}^\delta
\end{array} \right).
\label{eq:start-values-system-matrix}
\end{align}
Note that the matrix $ \Sh \in \myrnn[\mym] $
introduced in \refeq{start-values-system-matrix}
depends on the stepsize $ h $.
\begin{proposition}
The system matrix $ \Sh $ in \refeq{start-values-system-matrix}
is regular for sufficiently small values of $ h $, and
$ \maxnorm{\Shinv} = \Landausm{1} $ as $ h \to 0 $.
\label{th:start-values-system-matrix}
\end{proposition}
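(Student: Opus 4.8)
The key observation is that the matrix $\Sh$ factors as $\Sh = D_h \Wh$ up to lower-order perturbations, where $\Wh$ is the constant (in $h$) matrix whose $(n,s)$-entry is the starting weight $\wstart{ns}$ from \refeq{start-weights-system-matrix}, and $D_h$ captures the kernel factors $k(\xn,\xs)$. The plan is to exploit the fact that $k(\xn,\xs) \to k(\xmin,\xmin) = 1$ as $h \to 0$, uniformly for $0 \le n,s \le m-1$, so that $\Sh$ is a small perturbation of the fixed invertible matrix $\Wh$. First I would record that $\Wh$ is invertible: its columns are the starting weight vectors $(\wstart{ns})_{n=1}^{m}$, and since \refeq{start-weights-system-matrix} expresses each such vector as the solution of a nonsingular Vandermonde-type system with a right-hand side independent of $h$, the matrix $\Wh$ is a fixed nonsingular matrix (alternatively, one can observe directly that the quadrature weights $\wstart{ns}$ for the $m$ nodes $x_0,\dots,x_{m-1}$ and the $m$ upper limits $x_1,\dots,x_m$ are uniquely determined, so the associated coefficient matrix is nonsingular).

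Next I would write $\Sh = \Wh + (\Sh - \Wh)$ and estimate the perturbation. The $(n,s)$-entry of $\Sh - \Wh$ is $\wstart{ns}\klasm{k(\xn,\xs) - 1} = \wstart{ns}\klasm{k(\xn,\xs) - k(\xmin,\xmin)}$. Since the starting weights $\wstart{ns}$ are bounded independently of $h$ (being the fixed solution of \refeq{start-weights-system-matrix}), and since $k$ is continuous on $\eset$ by \ref{item:assump-k-smooth} of Assumption \ref{th:msm-assump} with $\xn - \xmin = nh \le (m-1)h \to 0$ and likewise $\xs - \xmin \to 0$, we obtain $\maxnorm{\Sh - \Wh} = \Landausm{1} \cdot o(1) = o(1)$ as $h \to 0$. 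Wait — more precisely, using the Lipschitz/continuity of $k$ on the compact set $\eset$, each entry is $\Landausm{h}$, hence $\maxnorm{\Sh - \Wh} = \Landausm{h}$.

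The conclusion then follows from the standard Banach-lemma / Neumann-series argument: for $h$ small enough that $\maxnorm{\Whinv}\maxnorm{\Sh - \Wh} \le \tfrac12$, the matrix $\Sh = \Wh\klasm{I + \Whinv(\Sh - \Wh)}$ is invertible, and
\begin{align*}
\maxnorm{\Shinv} \le \frac{\maxnorm{\Whinv}}{1 - \maxnorm{\Whinv}\maxnorm{\Sh - \Wh}} \le 2\maxnorm{\Whinv} = \Landausm{1}
\quad\text{as } h \to 0,
\end{align*}
since $\maxnorm{\Whinv}$ is a fixed constant. This gives both the regularity of $\Sh$ for small $h$ and the claimed uniform bound.

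**Main obstacle.** There is no deep difficulty here; the only point requiring a little care is the clean verification that $\Wh$ is genuinely nonsingular and that its inverse is bounded by a constant independent of $h$ — this reduces to the nonsingularity of the interpolatory-quadrature coefficient matrix in \refeq{start-weights-system-matrix}, which in turn is a rescaled Vandermonde system and hence invertible. One should also be slightly careful about the indexing mismatch between the row index $n$ running from $1$ to $m$ in \refeq{msm-noise-start-values} and the node index $s$ running from $0$ to $m-1$, but this is purely bookkeeping. The argument is otherwise a routine perturbation estimate.
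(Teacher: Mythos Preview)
Your overall strategy---write $\Sh$ as an $\Landausm{h}$ perturbation of the fixed starting-weights matrix and invoke a Neumann-series bound---is exactly the route the paper takes. The perturbation estimate and the Banach-lemma conclusion are fine.

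The gap is in your justification that the starting-weights matrix (the paper calls it $T$; your name $\Wh$ clashes with the circulant matrix $\Wh$ defined later in \refeq{Wh-def}) is nonsingular. You argue that ``each such vector [is] the solution of a nonsingular Vandermonde-type system'', and in your Main Obstacle you say this ``reduces to the nonsingularity of the interpolatory-quadrature coefficient matrix in \refeq{start-weights-system-matrix}''. But the nonsingularity of the Vandermonde system in \refeq{start-weights-system-matrix} only tells you that each \emph{row} $(\wstart{r0},\ldots,\wstart{r,m-1})$ of $T$ is uniquely determined; it says nothing about whether those $m$ rows are linearly independent. What is actually needed is that the $m$ right-hand sides $(r,\,r^2/2,\ldots,r^m/m)^\top$ for $r=1,\ldots,m$ form a nonsingular matrix. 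The paper handles this cleanly by writing the relation as $T\mtilde = BD$, where $\mtilde$ is the transposed Vandermonde matrix, $D=\diag(1/\tau)$ and $B=(n^\tau)$; since $\mtilde,B,D$ are each nonsingular (the latter two being a Vandermonde-type matrix and a diagonal matrix with nonzero entries), so is $T$. You should supply this (or an equivalent) argument rather than asserting it.
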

\proof
We first consider the situation $ k \equiv 1 $.
In a first step we observe that 
\refeq{start-weights-system-matrix} applied for $ r = \myseqq{1}{2}{m} $, and a subsequent transposition implies the identity
\begin{align}
\myunderbrace{
\left( \begin{array}{@{\ }c@{\ \  }c@{\ \ }c@{\ \ }c@{\ }}
\wstart{10} &
\wstart{11} &
\cdots &
\wstart{1,\mym-1}
\\ [2mm]
\wstart{20} &
\wstart{21} &
\cdots &
\wstart{2,\mym-1}
\\ [2mm]
\vdots & \vdots & & \vdots
\\ [2mm]
\wstart{\mym0} &
\wstart{\mym1} &
\cdots &
\wstart{\mym,\mym-1}
\end{array} \right)
}{\dis =: \ T}
\mtilde
= B D,
\label{eq:matrix-t-def}
\end{align}
where $ \mtilde \in \myrnn[\mym] $ 
denotes the transpose of the system matrix in \refeq{start-weights-system-matrix}, and
\begin{align*}
D = \diag\klabi{\tfrac{1}{\myxpo}  \ : \ \myxpo = \myseqq{1}{2}{\mym} }
\in \myrnn[\mym],
\qquad
B = \kla{n^\myxpo}_{n=1,\ldots,\mym \atop \myxpo=1,\ldots,\mym}
\in \myrnn[\mym].
\end{align*}
The matrices $ D, B $ and $ \mtilde $ are regular,
and hence the matrix $ T \in \myrnn[\mym] $ introduced in
\refeq{matrix-t-def} is regular.
In the case $ k \equiv 1 $, the latter matrix coincides with the matrix $ \Sh $.

We now consider the general case for $ k $. We have
$ k\klasm{x,x} = 1 $ and
$ x_n = \xminpl \Landausm{h} $ for $ n \eq \myseqq{1}{2}{\mym-1} $, and thus
$ k\klasm{x_n,x_s} = 1 + \Landausm{h} $ for
$ n = \myseqqsh{1}{2}{\mym} $
and $ s = \myseqqsh{0}{1}{\mym-1} $. We thus have
$ \Sh = T + \Landausm{h} $ for $ h \to 0 $, and 
from this the proposition immediately follows.
\proofend

\bn
Next we consider the approximation properties of the \inapps.
\begin{theorem}
Let the conditions of Assumptions \ref{th:msm-assump} be satisfied.
Then the \inapps 
$ \myseqq{\undelta[0]}{\undelta[1]}{\undelta[\emm-1]} $,
determined by
\refeq{msm-noise-start-values} for $ h $ sufficiently small,
satisfy
\begin{align*}
\max_{n=\myseqq{0}{1}{\mym-1}} \modul{\undelta - u\klasm{x_n} }
= \Landauno{h^\myqq + \delta/h} \as (h,\delta) \to 0.
\end{align*}
\label{th:start-values-error}
\end{theorem}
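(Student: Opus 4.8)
The plan is to compare the perturbed starting system \refeq{msm-noise-start-values} with the analogous relation satisfied by the \emph{exact} nodal values $u(\xn[0]),\dots,u(\xn[\mym-1])$ --- up to a quadrature error --- and then to invert the system matrix $\Sh$ using the bound $\maxnorm{\Shinv}=\Landausm{1}$ from Proposition~\ref{th:start-values-system-matrix}.

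\emph{Step 1: the exact values almost solve the starting system.} Fix $n\in\inset{\myseqq{1}{2}{\mym}}$ and apply the interpolatory quadrature estimate of Example~\ref{th:interpolatory-start-weights} (with $\mym$ interpolation nodes, so that the index $r$ there equals $n$) to the integrand $\psi(y)=k(\xn,y)u(y)$. The hypothesis to check is that $\psi\in\Cpl{\myqq-1}[\xmin,x_m]$ with a Lipschitz constant that does not depend on $n$. This follows by combining part~\refitem{assump-u} of Assumption~\ref{th:msm-assump} (which gives $u\in\Cpl{\myqq-1}[\xmin,\xmax]$ for some $L$), part~\refitem{assump-k-smooth} (which gives that $k(\xn,\cdot)$ and its derivatives in $y$ up to order $\myqq$ are defined and bounded on the relevant set, uniformly in $n$ --- this is precisely why the domain $\eset$ in \refitem{assump-k-smooth} is enlarged to also contain the square $\xmin\le x,y\le\xminpl \mym\hmax$, which covers $x=\xn$, the nodes $\xn[s]$ for $0\le s\le\mym-1$, and all $y\in[\xmin,x_m]$ when $n\le\mym$, since $h\le\hmax$), together with the product rule for $k(\xn,\cdot)\,u$. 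That the intervals $[\xmin,\xn]$ shrink as $h\to0$ causes no trouble, exactly as in the remark following Lemma~\ref{th:locerr-estimate}, since Example~\ref{th:interpolatory-start-weights} provides the bound uniformly. As $\int_\xmin^{\xn}k(\xn,y)u(y)\,dy=(Au)(\xn)=f(\xn)$ by \refeq{volterra-inteq}, this yields
\[
h\sum_{s=0}^{\mym-1}\wstart{ns}\,k(\xn,\xs)\,u(\xs)=f(\xn)+\Landauno{h^{\myqq+1}},\qquad n=\myseqq{1}{2}{\mym},
\]
uniformly in $n$.

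\emph{Step 2: subtract and invert.} Subtracting this display from \refeq{msm-noise-start-values}, and writing $e=\kla{\undelta[0]-u(\xn[0]),\ldots,\undelta[\mym-1]-u(\xn[\mym-1])}^\tee$ together with the matrix $\Sh$ of \refeq{start-values-system-matrix}, I obtain $h\Sh e=\varrho$, where $\varrho_n=\fndelta-f(\xn)+\Landauno{h^{\myqq+1}}$, so that $\modul{\varrho_n}\le\delta+Ch^{\myqq+1}$ uniformly in $n$ for a constant $C$ independent of $n,h,\delta$, by the noise bound \refeq{rhs-assump}. Since $\Sh$ is regular for $h$ small and $\maxnorm{\Shinv}=\Landausm{1}$ by Proposition~\ref{th:start-values-system-matrix}, it follows that
\[
\maxnorm{e}\le\maxnorm{\Shinv}\,\tfrac1h\,\max_{1\le n\le\mym}\modul{\varrho_n}=\Landauno{\tfrac1h\kla{\delta+h^{\myqq+1}}}=\Landauno{h^{\myqq}+\delta/h},
\]
which is the asserted estimate.

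The only genuinely delicate point is Step~1: verifying that the hypotheses of Example~\ref{th:interpolatory-start-weights} hold \emph{uniformly in $n$}, i.e.\ that $k(\xn,\cdot)u$ belongs to $\Cpl{\myqq-1}$ on $[\xmin,x_m]$ with an $n$\bs{}independent Lipschitz constant; this is exactly where the extra piece of the domain of $k$ built into \refitem{assump-k-smooth} is essential, since for $n<\mym$ the local integration range $[\xmin,\xn]$ and some of the interpolation nodes lie outside the lower triangle $\xmin\le y\le x\le\xmax$. Everything after that is the routine combination of the interpolatory quadrature error bound of Example~\ref{th:interpolatory-start-weights} with the already established stability estimate $\maxnorm{\Shinv}=\Landausm{1}$.
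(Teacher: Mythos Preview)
Your proof is correct and follows essentially the same route as the paper: apply the interpolatory quadrature bound of Example~\ref{th:interpolatory-start-weights} to $\psi(y)=k(\xn,y)u(y)$ to see that the exact nodal values satisfy the starting system up to $\Landauno{h^{\myqq+1}}$, subtract, and invert using $\maxnorm{\Shinv}=\Landausm{1}$ from Proposition~\ref{th:start-values-system-matrix}. Your discussion of why the enlarged domain $\eset$ in part~\refitem{assump-k-smooth} is needed is more explicit than the paper's, but the argument is otherwise the same.
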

\proof
It is clear from
\refeq{start-values-system-matrix} and Proposition \ref{th:start-values-system-matrix}
that the \inapps $ \myseqq{\undelta[0]}{\undelta[1]}{\undelta[\mym-1]} $
exist and are uniquely determined for $ h $ sufficiently small.
We have
\begin{align}
h \mysum{s=0}{\mym-1}
\wstart{ns} \cdott k\klasm{x_n,x_s} \myendelta[s]
= \Landauno{h^{\myqq+1} + \delta} \for n = \myseqq{1}{2}{\mym},
\label{eq:start-values-a}
\end{align}
where
\begin{align*}
\enndelta = \undelta[s] - u(\xs), \quad s=\myseqq{0}{1}{\mym-1},
\end{align*}
denote the approximation errors.
This follows
from the considerations in Example \ref{th:interpolatory-start-weights},
with the notation $ r = n $ and for
$ \myfun(y) =  k\klasm{\xn,y} u\klasm{y} $ for $ \xmin \le y \le \xm $. 
A matrix-vector formulation of \refeq{start-values-a}
yields
$ h \Sh \Delta_h^\delta = \Landauno{h^{\myqq+1} + \delta} $ as $ h \to 0 $,
with $ \Delta_h^\delta \defeq (\myendelta[0], \myendelta[1], \ldots, \myendelta[\mym-1])^\tee
\in \reza^\mym $,
and with the matrix $ \Sh $ from \refeq{start-values-system-matrix}.
According to Proposition \ref{th:start-values-system-matrix}, this matrix $ \Sh $
is regular for sufficiently small values of $ h $, and
$ \maxnorm{\Shinv} = \Landausm{1} $ as $ h \to 0 $.
From this the statement of the theorem follows.
\proofend
\subsection{The main result}
We next present the main result of this paper which extends the results by Wolkenfelt (\mynocitea{Wolkenfelt}{79}, \mynocitea{Wolkenfelt}{81}) to the case of perturbed \rhss.
\begin{theorem}
Let the conditions of Assumption \ref{th:msm-assump} be 
satisfied, and let the approximations $ \myseqq{\undelta[0]}{\undelta[1]}{\undelta[\nmax-\mu]} $
be determined by Algorithm \ref{th:msm-volterra-scheme}, for $ h $ sufficiently small.
Then the following error estimate holds,
\begin{align}
\max_{n=\myseqq{0}{1}{\nmax-\mu}} 
\modul{\undelta - u(\xn)}
= \Landauno{h^\myqq + \delta/h} \as \hdeltatonull.
\label{eq:main-msm}
\end{align}
\label{th:main-msm}
\end{theorem}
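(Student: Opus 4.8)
\proof
The plan is to turn the error recursion into a perturbed discrete convolution equation, to invert the convolution \emph{exactly} using the exponential decay of its inverse coefficients afforded by the \schurpolynomial hypothesis on $\sigma$, and to close the estimate with a discrete Gronwall inequality. Put $\myendelta[s]:=\undelta[s]-u(\xs)$ and $\psi^{[n]}(y):=k(\xn,y)u(y)$; by items \ref{item:assump-u} and \ref{item:assump-k-smooth} of Assumption~\ref{th:msm-assump} the family $\{\psi^{[n]}\}$ lies in $\Cpl{\myqq-1}[\xmin,\xn]$ with a Lipschitz parameter uniform in $n$, and $(I\psi^{[n]})(\xn)=\int_\xmin^{\xn}k(\xn,y)u(y)\,dy=f(\xn)$. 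Subtracting Corollary~\ref{th:ode_msm_representation}, applied to $\psi^{[n]}$, from identity \refeq{msm-noise-proofrep} gives, for $m+\mu\le n\le\nmax$,
$$
h\sum_{s=0}^{n-\mu}\myomega_{ns}\,k(\xn,\xs)\,\myendelta[s]
=\bigl(\fndelta-f(\xn)\bigr)+\sum_{s=0}^{n-m}\alpinv_{n-m-s}\locerr{\psi^{[n]},\xs,h}+\Landauno{h^{\myqq+1}},
$$
whose right-hand side is $\Landauno{\delta+h^{\myqq}}$ by Lemma~\ref{th:locerr-estimate} and \refeq{gamman-bounded}. Splitting off the starting terms $0\le s\le m-1$, which are $\Landauno{h^{\myqq}+\delta/h}$ by Theorem~\ref{th:start-values-error} and carry uniformly bounded weights by \refeq{omega-bounded}, and using $\myomega_{ns}=\mygamma_{n-\mu-s}$ for $s\ge m$, we obtain, with $\ell:=n-\mu$ ranging over $m\le\ell\le\nmax-\mu$,
$$
h\sum_{s=m}^{\ell}\mygamma_{\ell-s}\,k(\xn[\ell+\mu],\xs)\,\myendelta[s]=\hat\rho_\ell,\qquad \hat\rho_\ell=\Landauno{\delta+h^{\myqq}},
$$
where $\hat\rho_\ell$ still records separately its noise part and the local-truncation sum carried over from the first display.

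In matrix form this reads $h(G+P)W=\hat R$ with $W=(\myendelta[m],\dots,\myendelta[\nmax-\mu])^{\tee}$, $G$ the semicirculant matrix with symbol $\gamma(\xi)$, and $P$ the lower triangular matrix with entries $P_{\ell s}=\mygamma_{\ell-s}\bigl(k(\xn[\ell+\mu],\xs)-1\bigr)$. Since $\mygamma_0=b_{m-\mu}/a_m\neq0$, $G$ is invertible and $G^{-1}$ is the semicirculant matrix with entries $\gaminv_j$, which decay exponentially by Remark~\ref{th:strong-root-cond-remarks}\,(a); in particular $\sum_j\modul{\gaminv_j}<\infty$ and $\sum_j j\modul{\gaminv_j}<\infty$. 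Multiplying by $h^{-1}G^{-1}$ gives $(I+G^{-1}P)W=h^{-1}G^{-1}\hat R$. The crucial observation is that $\modul{(G^{-1}P)_{\ell s}}\le Ch$ uniformly: in $(G^{-1}P)_{\ell s}=\sum_{t}\gaminv_{\ell-t}\mygamma_{t-s}\bigl(k(\xn[t+\mu],\xs)-1\bigr)$ write $k(\xn[t+\mu],\xs)-1=\bigl(k(\xn[\ell+\mu],\xs)-1\bigr)-\bigl(k(\xn[\ell+\mu],\xs)-k(\xn[t+\mu],\xs)\bigr)$; the first piece contracts \emph{exactly} via $\gaminv\ast\mygamma=(\delta_{0j})_{j\ge0}$ to the diagonal contribution $k(\xn[\ell+\mu],\xn[\ell])-1=\Landauno{h}$ (using $k(x,x)=1$), while the second is $\Landauno{h}$ because $\modul{k(\xn[\ell+\mu],\xs)-k(\xn[t+\mu],\xs)}\le C(\ell-t)h$ is damped by the exponentially small $\gaminv_{\ell-t}$. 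Hence row $\ell$ reads $(1+\Landauno{h})\myendelta[\ell]=(h^{-1}G^{-1}\hat R)_\ell-\sum_{s<\ell}(G^{-1}P)_{\ell s}\myendelta[s]$, so for $h$ small $\modul{\myendelta[\ell]}\le2\modul{(h^{-1}G^{-1}\hat R)_\ell}+2Ch\sum_{s=m}^{\ell-1}\modul{\myendelta[s]}$, and the discrete Gronwall inequality yields $\max_\ell\modul{\myendelta[\ell]}\le2e^{2C(\diffxmaxxmin)}\max_\ell\modul{(h^{-1}G^{-1}\hat R)_\ell}$.

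It remains to show $\max_\ell\modul{(h^{-1}G^{-1}\hat R)_\ell}=\Landauno{h^{\myqq}+\delta/h}$. The noise part of $\hat R$, being pointwise $\Landauno{\delta}$, contributes $h^{-1}\bigl(\sum_j\modul{\gaminv_j}\bigr)\Landauno{\delta}=\Landauno{\delta/h}$, and likewise the $\Landauno{h^{\myqq+1}+\delta}$ remainder contributes $\Landauno{h^{\myqq}+\delta/h}$. The local-truncation part equals $h^{-1}\sum_{\ell'}\gaminv_{\ell-\ell'}\sum_{s}\alpinv_{\ell'+\mu-m-s}\locerr{\psi^{[\ell'+\mu]},\xs,h}$, and here I would freeze $\psi^{[\ell'+\mu]}$ at $\ell'=\ell$. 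The frozen part collapses, up to exponentially decaying boundary contributions that stay harmless after summation, via $\gaminv\ast\alpinv=\betinv$ --- exponentially decaying again by the \schurpolynomial hypothesis, Remark~\ref{th:strong-root-cond-remarks}\,(a) --- to $h^{-1}\sum_s\betinv_{\ell+\mu-m-s}\locerr{\psi^{[\ell+\mu]},\xs,h}=\Landauno{h^{\myqq}}$, using $\sum_j\modul{\betinv_j}<\infty$ and $\locerr{\psi^{[\ell+\mu]},\xs,h}=\Landauno{h^{\myqq+1}}$. The correction $\locerr{\psi^{[\ell'+\mu]}-\psi^{[\ell+\mu]},\xs,h}$ is treated by linearity of $\locerr{\cdot}$ in its first argument, the bound $\modul{\partial_y^{i}\bigl(k(\xn[\ell'+\mu],y)-k(\xn[\ell+\mu],y)\bigr)}\le C\modul{\ell-\ell'}h$ for $0\le i\le\myqq$ (this is where $k\in C^{\myqq}(\eset)$ enters) and the remainder formula from the proof of Lemma~\ref{th:locerr-estimate}, giving $\Landauno{\modul{\ell-\ell'}h^{\myqq+1}}$; upon summation the extra factor $\modul{\ell-\ell'}$ is absorbed by $\sum_j j\modul{\gaminv_j}<\infty$, again leaving $\Landauno{h^{\myqq}}$. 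Combining the estimates with Theorem~\ref{th:start-values-error} proves \refeq{main-msm}. The step I expect to be the main obstacle is precisely this last one: controlling the interplay of the merely bounded convolution by $\gamma$ with the genuinely $n$-dependent factor $k(\xn,\cdot)$ --- it is the \schurpolynomial assumption on $\sigma$, through the exponential decay of $\betinv$ and $\gaminv$, that renders the relevant sums \emph{summable} rather than merely bounded, thereby replacing the more delicate resolvent-type analysis of the earlier work.
\proofend
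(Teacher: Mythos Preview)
Your approach is essentially the paper's: derive the error equation by subtracting Corollary~\ref{th:ode_msm_representation} from \refeq{msm-noise-proofrep}, prove stability by showing $\Whinv\Ah=I+L_h$ with $\Landauno{h}$ entries and applying discrete Gronwall (your $G$ and $G+P$ are exactly the paper's $\Wh$ and $\Ah$), and then handle the truncation-error vector $\rh$ by a freezing argument exploiting the \schurpolynomial hypothesis. The paper organizes this last step slightly differently: it factors $\Whinv=\Vhinv\Uh$ and applies the \emph{finitely supported} convolution $\Uh$ first, which localizes the freezing correction to at most $m+1$ terms and makes the $\Landauno{h^{\myqq+2}}$ off-diagonal entry bound immediate; you instead freeze inside the full $\gaminv$-sum and invoke $\gaminv\ast\alpinv=\betinv$ directly, which also works but forces you to carry the long inner $s$-sum through the correction.

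On that correction your write-up has two compensating slips. The bound on $\locerr{\psi^{[\ell'+\mu]}-\psi^{[\ell+\mu]},x_s,h}$ should be $\Landauno{|\ell-\ell'|\,h^{\myqq+2}}$, not $\Landauno{|\ell-\ell'|\,h^{\myqq+1}}$: the Lipschitz constant of the $(\myqq-1)$th derivative of the difference is $\Landauno{|\ell-\ell'|h}$, and the remainder formula in the proof of Lemma~\ref{th:locerr-estimate} then contributes an additional factor $h^{\myqq+1}$. Conversely, you omit the $\Landauno{N}=\Landauno{h^{-1}}$ cost of the inner sum over $s$ (the coefficients $\alpinv_j$ are merely bounded, not summable). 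These two errors cancel and your final $\Landauno{h^{\myqq}}$ is correct, but both points should be made explicit. The paper's factorization $\Whinv=\Vhinv\Uh$ sidesteps this bookkeeping, since applying $\Uh$ first replaces the long $\alpinv$-sum by the short $\alpha$-sum before any freezing occurs.
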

\begin{proof} 
The \inapp errors are already covered by Theorem \ref{th:start-values-error}, so it remains to estimate the error $ \undelta - u(\xn) $
for $ n=\myseqq{\emm}{\emm+1}{\nmax-\mu} $. 
For this we may assume $ N \ge \mym+\mu $, since otherwise nothing is to be done for.

(1) In a first step we observe
that the following system of error equations holds: 
\begin{align}
h \mysum{s=\mym}{n-\mu} \mygamma_{n-\mu-s} k\klasm{\xn,\xs} 
\enndelta[s]
= \globerr
+ \Landauno{h^{\myqq+1} + \delta}
\for n = \myseqqsh{\mym+\mu}{\mym+\mu+1}{\nmax},
\label{eq:main-msm-a}
\end{align}
uniformly in $ n $, where
\begin{align}
\enndelta &= \undelta[s] - u(\xs), \quad
s=\myseqqsh{\mym}{\mym+1}{\nmax-\mu},
\nonumber \\
\globerr &= 
\sum_{s=\mu}^{n-m} \alpinv_{n-m-s} \myfunspectb{\xn,\xs},
\quad n=\myseqqsh{\mym+\mu}{\mym+\mu1}{\nmax}.
\label{eq:globerr-def}
\end{align}
Furthermore,
\begin{align}
\myfunspectb{x,y} := 
\locerr{z \mapsto k(x,z)u(z),y, h },
\qquad \xmin \le y \le x-mh, \quad \xmin < x \le \xmax,
\label{eq:truncation-ku}
\end{align}
denotes the truncation error corresponding to the function $ \myfun(y)= k\klasm{x,y} u\klasm{y}, \ \xmin \le y \le x $.
The error representation \refeq{main-msm-a} follows by considering the difference of 
the representation \refeq{msm-noise-proofrep} on one side and
the representations in Corollary
\ref{th:ode_msm_representation} on the other side.
We have taken \refeq{start-values-a} and 
$ \mysumtxt{s=0}{\mu-1} \alpinv_{n-m-s} \myfunspectb{\xn,\xs}
= \Landauno{h^{\myqq+1}} $ into consideration here. 
This allows to start summation in \refeq{globerr-def}
with $ s = \mu $.

(2) We next consider a \matvecform of \refeq{main-msm-a}. 
As a preparation we introduce the notation
\begin{align}
\nmaxpmu \defeq \nmax-\mym-\mu+1
\label{eq:nmaxpmu-def}
\end{align}
and consider the system matrix $ \Ah \in \myrnn[\nmaxpmu] $ given by
\begin{align}
\Ah =
\left(
\begin{array}{@{\quadti}c@{\quadti}c@{\quadti}c@{\quadti}c@{\quadti}c@{\quadti}}
\mygammanmupur \mykkom{\mym+\mu}{\mym} & 0 & \cdots & \cdots & 0 \\[4mm]
\mygammanmu[1] \mykkom{\mym+\mu+1}{\mym} & 
\mygammanmupur \mykkom{\mym+\mu+1}{\mym+1} & 
\ddots & & 0 \\[4mm]
\vdots & 
\mygammanmu[1] \mykkom{\mym+\mu+2}{\mym+1} & \ddots  & \ddots &  \vdots \\[4mm]
\vdots & & \ddots  & \ddots & 0 \\[4mm]
\mygamman[\nmax-\mym-\mu] \myk{\nmax}{\mym} & \cdots & \cdots & 
\mygammanmu[1] \mykkom{\nmax}{\nmax-\mu-1} &
\mygammanmupur \mykkom{\nmax}{\nmax-\mu}
\end{array} \right),
\label{eq:ah-def}
\end{align}
with the notation
\begin{align*}
\myk{n}{s} \eq k\klasm{\xn,\xs}
\for \mym \le s \le n-\mu,
\quad
\mym+\mu \le n \le \nmax.
\end{align*}
In addition we consider the vectors 
\begin{align}
\Ehdelta &= \kla{\enndelta[s]}_{s=\mym,\ldots,\nmax-\mu},
\quad
\rh = \kla{\globerr}_{n=\mym+\mu,\ldots,\nmax}.
\label{eq:main-msm-b-1}
\end{align}
Using these notations, the linear system of equations \refeq{main-msm-a}
obviously takes the form
\begin{align}
h \Ah \Ehdelta  = \rh \plus \Fh, 
\withsome \Fh \in \reza^{\nmaxpmu}, \ \maxnorm{\Fh} = \Landauno{h^{\myqq+1}+\delta},
\label{eq:main-msm-b-2}
\end{align}
where $ \maxnorm{\cdot} $ denotes the maximum norm on $ \reza^{\nmaxpmu} $.

(3) For a further treatment of the identity \refeq{main-msm-b-2},
let the matrices 
$ \Wh \in \myrnn[\nmaxpmu] $
and its inverse $ \Whinv \in \myrnn[\nmaxpmu] $
be given by
\feldstretch{1.7}
\begin{align}
\Wh = 
\left(
\begin{array}{c@{\quadsm}c@{\quadsm}c@{\quadsm}c}
\gamma_0 & 0 & \cdots &  0 \\
\gamma_{1} & \gamma_{0} & \ddots & \vdots \\ 
\vdots & \ddots & \ddots & 0 \\ 
\gamma_{\nmax-\mym-\mu} & \cdots & \gamma_{1} & \gamma_{0}
\end{array} \right) \cdott ,
\quad
\Whinv = 
\left(
\begin{array}{c@{\quadsm}c@{\quadsm}c@{\quadsm}c}
\mygammainvnmupur & 0 & \cdots &  0 \\
\mygammainvnmu[1] & \mygammainvnmupur & \ddots & \vdots \\ 
\vdots & \ddots & \ddots & 0 \\ 
\mygammainvnmu[\nmax-\mym-\mu] & \cdots & \mygammainvnmu[1] & \mygammainvnmupur 
\end{array} \right) \cdott .
\label{eq:Wh-def}
\end{align}
We next show that
\begin{align}
\maxnorm{\Whinv } = \Landausm{1},
\qquad
\maxnorm{\Ah^{-1} \Wh} 
= \Landauno{1},
\qquad
\maxnorm{\Ah^{-1} } = \Landausm{1} \as h \to 0,
\label{eq:main-msm-c}
\end{align}
where $ \maxnorm{\cdot} $ denotes the matrix norm induced by the maximum vector norm on $ \reza^{\nmaxpmu} $. 
In fact, the estimate $ \maxnorm{\Whinv } = \Landausm{1} $ as $ h \to 0 $
follows immediately from the exponential decay of the coefficients of the \inverse of the \genfunc $ \mygamma(\xi) $, 
\cf 
part (a) of Remark \ref{th:strong-root-cond-remarks}.
For the proof of the second statement in \refeq{main-msm-c}, below it will be shown that 
the matrix $ \Whinv \Ah $ can be written in the form
\begin{align}
\Whinv \Ah = I_h  + L_h, 
\label{eq:fredholm-rep}
\end{align}
where $ I_h \in \myrnn[\nmaxpmu] $ denotes the identity matrix,  
and $ L_h = \kla{\mykap_{nj}(h)} \in \myrnn[\nmaxpmu] $
denotes some lower triangular matrix which satisfies
$ \max_{0 \le j \le n \le \nmax-\emm-\mu} {\modul{\mykap_{nj}(h)}} = \Landauno{h} $ as $ h \to 0 $.
The representation \refeq{fredholm-rep} shows that the matrix $\Whinv \Ah $ is nonsingular for $ h $ small enough, and the discrete version of Gronwall's inequality 
then yields $ \maxnorm{\Ah^{-1} \Wh} = \Landauno{1} $ as $ h \to 0 $.
The third estimate in \refeq{main-msm-c} follows immediately from the other two 
estimates considered in \refeq{main-msm-c}.

\bn
In the sequel it will be shown that the representation
\refeq{fredholm-rep} is valid, and for this purpose
we consider the lower triangular matrix
\begin{align}
\Whinv \Ah =\kla{b_{nj}} \in \myrnn[\nmaxpmu]
\label{eq:bnj-def}
\end{align}
in more detail.
In fact, we have
for $ 0 \le j < n \le \nmaxpmumo $
\begin{align*}
& b_{nj}
=
\mysum{\myl=j}{n}
\mygammainvnmu[n-\myl] \mygammanmu[\myl-j] k\kla{x_{\mym+\mu+\myl},x_{\emm+j}}
=
\mysum{\myl=0}{n-j}
\mygammainvnmu[n-j-\myl] \mygammanmu[\myl] k\kla{x_{\mym+\mu+\myl+j},x_{\emm+j}}
\\
& =
k\kla{x_{\mym+\mu+n},x_{\emm+j}}
\myoverbrace{\mysum{\myl=0}{n-j}
\mygammainvnmu[n-j-\myl] \mygammanmu[\myl]}{\dis = \  0 }
\\
& \qquad
+ \mysum{\myl=0}{n-j-1}
\eckklabi{\mygammainvnmu[n-j-\myl] \mygammanmu[\myl]
\klabi{k\kla{x_{\mym+\mu+\myl+j},x_{\emm+j}}
- k\kla{x_{\mym+\mu+n},x_{\emm+j}} }}.
\end{align*}
Thus we have
\begin{align}
\modul{b_{nj}}
& =
\Landaula{h
\myunderbrace{\mysum{\myl=0}{n-j-1}
\modul{\mygammainvn[n-j-\myl]} \modul{\mygammanmu[\myl]} \kla{n-j-\myl}
}{\dis \idstar \Landauno{1}}
}
=
\Landauno{h}
\for \ 0 \le j < n \le \nmaxpmumo
\label{eq:convergence-theorem-i}
\end{align}
uniformly \wrt $ j $ and $ n $,
where \klasmsh{*} follows immediately from 
\refeq{gamman-bounded} and
the end of part (a) of Remark \ref{th:strong-root-cond-remarks}.
Moreover we have
\begin{align}
b_{nn} =
\mygammainvnmupur k\klasm{\xn[n+\mym+\mu],x_{n+\emm}} \mygammanmupur
= 1 + \Landauno{h} \for n = \myseqq{0}{1}{\nmaxpmumo},
\label{eq:convergence-theorem-j}
\end{align}
which follows from the identities $ \mygammainvnmupur = 1/\mygammanmupur $ and
$ k(x,x) \equiv 1 $, \cf \ref{item:assump-k=1} in Assumption~\ref{th:msm-assump}.
The statements \refeq{convergence-theorem-i} and \refeq{convergence-theorem-j}
show that the lower triangular matrix $ \Whinv \Ah $ in fact can be written as
in \refeq{fredholm-rep}.

(4) 
We still have to take a closer look at the
vector $ \rh \in \reza^{\nmaxpmu} $
considered in \refeq{main-msm-b-1}. It can be written as follows,
\begin{align}
\rh = \Bh \erra_h,
\label{eq:main-msm-f}
\end{align}
where
$ \Bh \in \myrnn[\nmaxpmu] $ is the following matrix,
\begin{align*}
\left(
\begin{array}{@{\hspace{0mm}}c@{\hspace{2mm}}c@{\hspace{-8mm}}c@{\hspace{2mm}}c@{\hspace{0mm}}}
\alpinv_0 \psitkomb{\mym+\mu}{\mu} & 0 & \cdots & 0 \\[6mm]
\alpinv_1 \psitkomb{\mym+\mu+1}{\mu} & 
\alpinv_0 \psitkomb{\mym+\mu+1}{\mu+1} &  \ddots & \vdots \\[6mm]
\vdots & 
\alpinv_1 \psitkomb{\mym+\mu+2}{\mu+1} & \ddots  &  \vdots \\[6mm]
\vdots & & \ddots & 0 \\[5mm]
\alpinv_{\nmax-\mym-\mu} \psitkomb{\nmax}{\mu} & \cdots & 
\alpinv_1 \psitkomb{\nmax}{\nmax-\emm-1} &
\alpinv_0 \psitkomb{\nmax}{\nmax-\emm}
\end{array} \right),
\end{align*}
and
$ \erra_h = (\myseqq{1}{1}{1}) \in \reza^{\nmaxpmu} $.
The representations \refeq{main-msm-b-2}
and \refeq{main-msm-f} give
$ h \Ah \Ehdelta 
= \Bh  \erra_h + \Fh $,
and due to \refeq{main-msm-c} it remains to show that
\begin{align}
\maxnorm{\Whinv \Bh} = \Landauno{h^{\myqq+1}}
\as h \to 0
\label{eq:main-msm-h-2}
\end{align}
holds.  \Ftp we introduce the notation
\begin{align*}
\Uh = \left(
\begin{array}{c@{\hspace{1mm}}c@{\hspace{1mm}}c@{\hspace{1mm}}c@{\hspace{1mm}}c@{\hspace{2mm}}c}
\alpha_0 & 0 & \cdots & \cdots & \cdots  & 0 \\[-2mm]
\vdots & \ddots & \ddots & \ddots &  & \vdots \\[-2mm]
\alpha_m & & \ddots & \ddots & &  \vdots \\[-2mm]
0 & \ddots & & \ddots & \ddots &\vdots \\[-2mm]
\vdots & \ddots & \ddots & \ddots & \ddots & 0 \\[-2mm]
0 & \cdots & 0 &  \alpha_m & \cdots & \alpha_0 
\end{array} \right), 
\quad
\Uhinv = \left(
\begin{array}{c@{\hspace{1mm}}c@{\hspace{1mm}}c@{\hspace{1mm}}c}
\alpinv_0 & 0 & \cdots & 0 \\
\alpinv_1 & \alpinv_0 & \ddots & \vdots \\ 
\vdots & \ddots & \ddots &  0 \\
\alpinv_{N-m-\mu} & \cdots & \alpinv_1  & \alpinv_0  \\
\end{array} \right)
\in \reza^{N_1 \times N_1},
\\
\Vh = \left(
\begin{array}{c@{\hspace{1mm}}c@{\hspace{1mm}}c@{\hspace{1mm}}c@{\hspace{1mm}}c@{\hspace{2mm}}c}
\beta_0 & 0 & \cdots & \cdots & \cdots  & 0 \\[-2mm]
\vdots & \ddots & \ddots & \ddots &  & \vdots \\[-2mm]
\beta_{m-\mu} & & \ddots & \ddots & &  \vdots \\[-2mm]
0 & \ddots & & \ddots & \ddots &\vdots \\[-2mm]
\vdots & \ddots & \ddots & \ddots & \ddots & 0 \\[-2mm]
0 & \cdots & 0 &  \beta_{m-\mu} & \cdots & \beta_0
\end{array} \right), 
\quad
\Vhinv = \left(
\begin{array}{c@{\hspace{1mm}}c@{\hspace{1mm}}c@{\hspace{1mm}}c}
\betinv_0 & 0 & \cdots & 0 \\
\betinv_1 & \betinv_0 & \ddots & \vdots \\ 
\vdots & \ddots & \ddots &  0 \\
\betinv_{N-m-\mu} & \cdots & \betinv_1  & \betinv_0  \\
\end{array} \right)
\in \reza^{N_1 \times N_1},
\end{align*}
and observe that
\begin{align}
\Wh = \Vh \Uhinv, \quad 
\Whinv = \Vhinv \Uh,
\label{eq:main-msm-h-3}
\end{align}
holds.
From the fact that the second characteristic polynomial
(see \refeq{sigma-strong-root-condition})
is a \schurpolynomial it follows
\begin{align}
\maxnorm{\Vhinv} = \Landauno{1} \quad \textup{as } h \to 0.
\label{eq:main-msm-h-4}
\end{align}
In the sequel we consider the
lower triangular matrix $ \Uh \Bh $ in more detail. It can be written as follows,
$ \Uh \Bh = \Mhs + \Chs $ with the diagonal matrix
$ \Mhs = \text{diag} \kla{\psitkomb{m+n}{n} : n = \myseqq{\mu}{\mu+1}{N-m} } $,
with $ \maxnorm{\Mhs} = \Landauno{h^{\myqq+1}} $ as $ h \to 0 $. In addition,
$ \Chs = \kla{c_{nj}(h)} \in \myrnn[\nmaxpmu] $
denotes some strictly lower triangular matrix
with $ \max_{0\le j < \n \le \nmaxpmumo} {\modul{c_{nj}(h)}}
= \Landausm{h^{\myqq+2}} $. 
See the third part of this proof for similar results \wrt the matrix $ \Whinv \Ah $.
Here we additionally use the mean value theorem
 with respect to the first variable of $ g $ and the fact that the local truncation error $g$ 
defined in \refeq{truncation-ku} satisfies
\begin{align*}
\tfrac{\partial }{\partial x} \myfunspectb{x,y} = 
\locerrbi{ z \mapsto \tfrac{\partial }{\partial x}  k(x,z)u(z),y, h }
= \Landauno{h^{\myqq+1}}
\end{align*}
uniformly for $ \xmin \le y \le x -m h $ and $ \xmin < x \le \xmax $.

This in particular means
$ \maxnorm{\Uh \Bh} = \Landauno{h^{\myqq+1}} $ as $ h \to 0 $,
and this together with
\refeq{main-msm-h-3} and
\refeq{main-msm-h-4}
implies
 \refeq{main-msm-h-2}.

The statement of the theorem now follows easily from 
the error representation \refeq{main-msm-a} and its matrix version
\refeq{main-msm-b-1}, \refeq{main-msm-b-2}, from
the stability estimates in \refeq{main-msm-c}, and from the considerations in part (4) of this proof.
\end{proof} 
\begin{remark}
The stability analysis presented in the third part of the proof of
Theorem \ref{th:main-msm} uses techniques similar to those used in
\mycitea{Eggermont}{81}; see also 
\mycitea{Lubich}{87} as well as
\mynocitea{Plato}{05} and \mynocitea{Plato}{12}.
\end{remark}

\bn
In the sequel, for \stepsizes $ h = h(\delta) = \tfrac{\diffxmaxxmin}{N} $,
with $ N =  N(\delta) $, with a slight abuse of notation  
we write
$ h \sim \delta^\beta $ as $ \delta \to 0 $, 
if there exist real constants $ c_2 \ge c_1 > 0 $ such that
$ c_1 h \le \delta^{\beta} \le c_2 h $ holds for $ \delta \to 0 $.
As an immediate consequence of Theorem \ref{th:main-msm}
we obtain the following main result of this paper.
\begin{corollary}
Let Assumption \ref{th:msm-assump} be satisfied.
For $ h = h(\delta) \sim \delta^{1/(\myqq+1)}$ we have
\begin{align*}
\max_{n=\myseqq{0}{1}{\nmax-\mu}} \modul{\undelta \minus u(\xn)}
= \Landauno{\delta^{\lfrac{\myqq}{(\myqq+1)}}}
\as \delta \to 0,
\end{align*}
where the approximations $ \myseqq{\undelta[0]}{\undelta[1]}{\undelta[\nmax-\mu]} $
are determined by Algorithm \ref{th:msm-volterra-scheme}.
\label{th:msm-apriori-choice}
\end{corollary}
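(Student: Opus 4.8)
The plan is to obtain the corollary as a direct consequence of the two-term error estimate \refeq{main-msm} in Theorem \ref{th:main-msm}, by making the a priori \stepsize choice precise and then balancing the contributions $h^\myqq$ and $\delta/h$.

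First I would pin down the parameter choice. Given $\delta > 0$, one may take the integer $N = N(\delta) := \lceil (\diffxmaxxmin)\,\delta^{-1/(\myqq+1)}\rceil$ and $h = h(\delta) = \tfrac{\diffxmaxxmin}{N}$; any other admissible choice of $N(\delta)$ with $h(\delta) \sim \delta^{1/(\myqq+1)}$ (in the sense fixed just before the corollary) works equally well. Since $\myqq \ge 1$, we have $h(\delta) \to 0$ as $\delta \to 0$, so for all sufficiently small $\delta$ the \stepsize is small enough and $N(\delta) \ge \Nmin$; thus the hypotheses of Theorem \ref{th:main-msm} are satisfied and the approximations $\myseqq{\undelta[0]}{\undelta[1]}{\undelta[\nmax-\mu]}$ produced by Algorithm \ref{th:msm-volterra-scheme} obey \refeq{main-msm}.

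Next I would substitute this choice into \refeq{main-msm}. Writing $c_1 h \le \delta^{1/(\myqq+1)} \le c_2 h$ with constants $0 < c_1 \le c_2$, one gets $h^\myqq \le c_1^{-\myqq}\,\delta^{\myqq/(\myqq+1)} = \Landauno{\delta^{\myqq/(\myqq+1)}}$ on the one hand, and $\delta/h \le c_2\,\delta\cdot\delta^{-1/(\myqq+1)} = c_2\,\delta^{\myqq/(\myqq+1)} = \Landauno{\delta^{\myqq/(\myqq+1)}}$ on the other, using the elementary identity $1 - \tfrac{1}{\myqq+1} = \tfrac{\myqq}{\myqq+1}$. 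Hence $h^\myqq + \delta/h = \Landauno{\delta^{\myqq/(\myqq+1)}}$ as $\delta \to 0$, and \refeq{main-msm} yields $\max_{n=\myseqq{0}{1}{\nmax-\mu}} \modul{\undelta \minus u(\xn)} = \Landauno{\delta^{\myqq/(\myqq+1)}}$, which is the assertion.

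I do not expect a genuine obstacle here: this is the standard a priori parameter-choice consequence of a convergence-rate estimate of the form $\Landauno{h^\myqq + \delta/h}$, where the exponent $1/(\myqq+1)$ equalizes the two terms. The only points that need a word of care are that $N(\delta)$ must be an integer exceeding $\Nmin$ (hence the ceiling, and hence the two-sided comparison "$\sim$" rather than an exact equality), and that the implied constants in Theorem \ref{th:main-msm} are uniform in $h$ — as that theorem asserts — so that they survive the passage to the limit $\delta \to 0$.
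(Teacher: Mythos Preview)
Your proposal is correct and matches the paper's approach: the paper gives no separate proof of this corollary but states it ``as an immediate consequence of Theorem \ref{th:main-msm}'', and your argument is precisely the standard balancing of the two terms $h^\myqq$ and $\delta/h$ in \refeq{main-msm} under the choice $h \sim \delta^{1/(\myqq+1)}$. Your additional remarks on the integer constraint for $N(\delta)$ and the uniformity of the constants are appropriate but not something the paper spells out.
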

We conclude this section with some remarks.
\begin{remark}
\begin{myenumerate}
\item
Assumption \ref{th:msm-assump} and Corollary \ref{th:msm-apriori-choice}
imply that the order of the method should be chosen as large as possible to allow best possible estimates for a wide range of smoothness degrees of solutions.
Note that, for $ m $ fixed, both the computational complexity and the number of function evaluations for the implementation of Algorithm
\ref{th:msm-volterra-scheme} are $ \Landau(N^2) $ as $ N \to \infty $. 
Thus the number of steps $ m $ in the considered multistep method has no impact here.

\item
For results on the regularization properties of the composite midpoint rule, see 
e.\,g.~\mycitea{Apartsin}{81} or
 \mycitea{Kaltenbacher}{10}.
For other special regularization methods for the approximate solution of
Volterra integral equations of the first kind with smooth kernels and perturbed 
\rhss, see \eg
\mycitea{Lamm}{00}.
\end{myenumerate}
\label{th:main-remark}
\end{remark}
\section{The balancing principle}
\subsection{Preparations}
The a priori choice of the \stepsize $ h $ considered 
in Corollary \ref{th:msm-apriori-choice}
requires knowledge of the smoothness of the exact solution $ u: \interval{\xmin}{\xmax} \to \reza $.
The balancing principle as an a posteriori strategy for choosing $ h $
has no such requirement and thus seems to be an interesting alternative. 
Its implementation, however, requires 
a determination of the coefficient of the error propagation term $ \lfrac{\delta}{h} $
that appears in the basic error estimate \refeq{main-msm}.
This is the subject of the following proposition.
\begin{proposition}
Under the conditions of Assumption \ref{th:msm-assump}
we have
\begin{align}
\max_{n=\myseqq{0}{1}{\nmax-\mu}} 
\modul{\undelta - u(\xn)}
\le  C_1 h^\myqq + C_2 \tfrac{\delta}{h} \quad \text{ for } 0 < h \le \myhbound,
\label{eq:msm-estimate-with-constant}
\end{align}
where $ C_1 $ and $ C_2 $ denote some constants chosen independently of $ h $, and $ \myhbound $ is chosen sufficiently small.
The constant $ C_2 $ may be chosen as follows:
\begin{align*}
C_2 &=
\max\Big\{C_{2a}, C_{2b} \big(1+C_{2a} \maxnorm{k}
\max_{m+\mu \le n \le \nmax} \mysum{s=0}{m-1} \modul{\myomega_{ns} }\big)
\Big\}, 
\ \textup{ where} 
\\ 
C_{2a} &= (1+L)\maxnorm{T^{-1}}, 
\quad
C_{2b} = 
(1+\muL) 
\Big(\mysum{s=0}{\infty} \vert \gaminv_{s} \vert\Big)
\exp\big((1+\muL) C_3 L\diffxmaxxminkla\big)
\\[-0mm]
& \qquad 
\ \textup{ with } \ 
C_{3} = \Big\{\sup_{r \ge 0} \vert \gamma_{r} \vert \Big\}
\mysum{s=1}{\infty} \vert \gaminv_{s} \vert s,
\end{align*}
where the notation $ \maxnorm{k} = \max_{(x,y) \in E} \modul{k(x,y)} $ is used,
and $ L \ge 0 $ denotes a Lipschitz constant of the kernel $ k $ with respect to the first variable. 
In addition, for the definition of
the sequence $ (\gaminv_s) $
and the matrix $ T $,
see \refeq{betinv-gaminv-def} and \refeq{matrix-t-def}, respectively.

Moreover,
$ \myhbound $ in \refeq{msm-estimate-with-constant} can be chosen as follows,
$ \myhbound = \min\{\tfrac{1}{m(1+L)\cond_\infty(T)}, \hmax\} $,
where $ \hmax $ is taken from Assumption \ref{th:msm-assump}.
In the special case $ k \equiv 1 $, 
the estimate \refeq{msm-estimate-with-constant}
holds with $ \myhbound = \hmax $.
\label{th:msm-estimate-with-constant}
\end{proposition}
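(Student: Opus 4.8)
The plan is to revisit the proofs of Theorem~\ref{th:start-values-error} and Theorem~\ref{th:main-msm}, carrying all constants explicitly, but to keep track only of the coefficient in front of the noise term $\delta/h$: the coefficient $C_1$ of $h^\myqq$ is left unspecified and simply absorbs the various $\Landauno{h^{\myqq+1}}$-contributions, whose constants depend on the Lipschitz data of $u$ and $k$ and on the \msm, but not on $h$. Here $L\ge0$ is a Lipschitz constant of $k$ in its first variable, which exists by item~\ref{item:assump-k-smooth} of Assumption~\ref{th:msm-assump}, and the \schurpolynomial condition~\refeq{sigma-strong-root-condition} (\cf part~(a) of Remark~\ref{th:strong-root-cond-remarks}) is precisely what makes $\sum_{s\ge0}\modul{\gaminv_s}$ and $C_3=\{\sup_{r\ge0}\modul{\gamma_r}\}\sum_{s\ge1}\modul{\gaminv_s}\,s$ finite.

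First I would sharpen Proposition~\ref{th:start-values-system-matrix}. Since $k\klasm{x,x}=1$ and $\modul{\xn-\xs}\le mh$, we have $\modul{k\klasm{\xn,\xs}-1}\le Lmh$, hence $\maxnorm{T^{-1}\klasm{\Sh-T}}\le Lmh\,\cond_\infty(T)$, with $T$ from~\refeq{matrix-t-def}. For $h\le\myhbound\le\tfrac1{m(1+L)\cond_\infty(T)}$ this is $\le\tfrac{L}{1+L}<1$, so a Neumann series argument gives that $\Sh$ is regular with $\maxnorm{\Shinv}\le(1+L)\maxnorm{T^{-1}}=C_{2a}$. In the linear system from the proof of Theorem~\ref{th:start-values-error}, $h\Sh\Ehdelta$ equals the sum of the starting-quadrature error (of size $\Landauno{h^{\myqq+1}}$) and the noise vector (sup-norm $\le\delta$); multiplying by $\tfrac1h\maxnorm{\Shinv}$ then yields $\max_{0\le s\le m-1}\modul{\undelta[s]-u(\xs)}\le C_1'h^\myqq+C_{2a}\,\tfrac\delta h$, so the $\delta/h$-coefficient of the starting errors equals $C_{2a}$.

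Next I would redo the proof of Theorem~\ref{th:main-msm} for $s=m,\ldots,\nmax-\mu$. Subtracting Corollary~\ref{th:ode_msm_representation} from~\refeq{msm-noise-proofrep} and absorbing the $\mu$ fixed truncation terms $\alpinv_{n-m-s}g_h(\xn,\xs)$, $0\le s<\mu$, produces the error system~\refeq{main-msm-a}, \ie $h\Ah\Ehdelta=\rh+\Fh$ of~\refeq{main-msm-b-2}, in which $\rh$ depends only on the exact data and is therefore $\delta$-free, while $\Fh$ carries the noise $\fndelta[n]-f(\xn)$, a starting-error term of the form $h\sum_{s=0}^{m-1}\myomega_{ns}k\klasm{\xn,\xs}\klasm{\undelta[s]-u(\xs)}$, and an $\Landauno{h^{\myqq+1}}$-remainder. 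By the starting-error bound above, the $\delta$-part of $\maxnorm{\Fh}$ is $\le C_{2a,1}\,\delta$ with $C_{2a,1}:=1+C_{2a}\maxnorm{k}\max_{m+\mu\le n\le\nmax}\sum_{s=0}^{m-1}\modul{\myomega_{ns}}$. For the stability constants, $\maxnorm{\Whinv}\le\sum_{s\ge0}\modul{\gaminv_s}$, and the triangular computation in part~(3) of the proof of Theorem~\ref{th:main-msm}, combined with the \meanvaluetheorem applied to the first variable of $k$, gives $\Whinv\Ah=D_h+L_h$ with $D_h$ diagonal, $\modul{\klasm{D_h}_{nn}-1}\le\muL h$ (from $\modul{k\klasm{x_{n+m+\mu},x_{n+m}}-1}\le\muL h$), and $L_h$ strictly lower triangular with $\modul{\klasm{L_h}_{nj}}\le LC_3h$. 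Because $m\ge\max\{1,\mu\}$ and $\cond_\infty(T)\ge1$, the definition of $\myhbound$ forces $h\le\tfrac1{1+\muL}$, so $D_h$ is regular with $\maxnorm{D_h^{-1}}\le1+\muL$ and the off-diagonal entries of $D_h^{-1}\klasm{\Whinv\Ah}$ are $\le(1+\muL)LC_3h$; the discrete Gronwall inequality then gives $\maxnorm{\klasm{\Whinv\Ah}^{-1}}\le(1+\muL)\exp\big((1+\muL)C_3L\diffxmaxxminkla\big)$, hence $\maxnorm{\Ah^{-1}}\le\maxnorm{\klasm{\Whinv\Ah}^{-1}}\maxnorm{\Whinv}\le C_{2b}$. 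Since $\rh$ is $\delta$-free and contributes to $C_1$ only — through the part-(4) identity $\rh=\Bh\erra_h$ with $\erra_h=(1,\ldots,1)$ and the estimate $\maxnorm{\Whinv\Bh}=\Landauno{h^{\myqq+1}}$ — the identity $\Ehdelta=\tfrac1h\Ah^{-1}\klasm{\rh+\Fh}$ yields $\max_{m\le s\le\nmax-\mu}\modul{\undelta[s]-u(\xs)}\le C_1''h^\myqq+C_{2b}C_{2a,1}\,\tfrac\delta h$. Taking the maximum of the two error mechanisms gives~\refeq{msm-estimate-with-constant} with $C_2=\max\{C_{2a},C_{2b}C_{2a,1}\}$ and $\myhbound$ as stated; in the special case $k\equiv1$ one has $\Sh=T$ exactly and $\muL h=0$, so no bound on $h$ beyond $\hmax$ is needed.

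I expect the main obstacle to be bookkeeping rather than any single deep estimate: one has to verify that the single cut-off $\myhbound=\min\{\tfrac1{m(1+L)\cond_\infty(T)},\hmax\}$ simultaneously validates the Neumann series for $\Shinv$, the regularity of $D_h$ (equivalently $h\le\tfrac1{1+\muL}$, which follows from $m\ge\max\{1,\mu\}$ and $\cond_\infty(T)\ge1$), the Gronwall step, and the smoothness range $h\le\hmax$ of Assumption~\ref{th:msm-assump}; and one has to keep the genuine $\Landauno{1/h}$ amplification — built into the recursion through the leading nonvanishing right-hand side coefficient $b_{m-\mu}$ and accounted for by the single prefactor $\tfrac1h$ in $\Ehdelta=\tfrac1h\Ah^{-1}\klasm{\cdot}$ — strictly separated from the merely bounded propagation of the noise through the starting system, so that no spurious extra factor $1/h$ enters $C_2$.
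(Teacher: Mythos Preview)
Your proposal is correct and follows essentially the same route as the paper's proof: both revisit the proofs of Theorem~\ref{th:start-values-error} and Theorem~\ref{th:main-msm}, make the Neumann-series/perturbation bound $\maxnorm{\Shinv}\le(1+L)\maxnorm{T^{-1}}$ explicit, split the forcing in~\refeq{main-msm-b-2} into a $\delta$-free part and a noise part carrying the starting errors, and then quantify $\maxnorm{\Ah^{-1}}$ via the entrywise bounds $\modul{b_{nn}}\ge 1-\mu Lh$ and $\modul{b_{nj}}\le C_3Lh$ for $j<n$ together with the discrete Gronwall inequality. Your extra remark that $\myhbound\le\tfrac{1}{m(1+L)\cond_\infty(T)}$ already forces $h\le\tfrac{1}{1+\mu L}$ (since $m\ge\max\{1,\mu\}$ and $\cond_\infty(T)\ge1$) is exactly the observation the paper makes in its last sentence before completing the proof.
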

\proof
Let $ \enndelta = \undelta[s] - u(\xs) $ for $ s=\myseqqsh{0}{1}{\nmax-\mu} $.
We first consider the starting error.
A closer look at the proof of Theorem \ref{th:start-values-error}
shows that
\begin{align}
\max_{ s=\myseqqsh{0}{1}{m-1}}\modul{\enndelta}
\le  
\maxnorm{\Sh^{-1}}(C_4 h^\myqq + \tfrac{\delta}{h})
\ \  \text{for} \ h > 0, 
\label{eq:msm-estimate-with-constant-a}
\end{align}
where $ \Sh $ denotes the system matrix considered in \refeq{start-values-system-matrix} and \refeq{start-values-a}, and $ h $ is chosen so small (details are given below)
such that the inverse matrix of $ \Sh $
exists. In addition, $ C_4 $ denotes some constant that may be chosen independently of $ h $.
So we need to estimate $ \maxnorm{\Sh^{-1}} $ which is done below.
First we consider the error of the present multistep scheme.
A closer look at the reasoning of \refeq{main-msm-a} shows that
\begin{align*}
h \mysum{s=\mym}{n-\mu} \mygamma_{n-\mu-s} k\klasm{\xn,\xs} \enndelta[s]
=
\fndelta-f(\xn) +  \globerr
- h \mysum{s=0}{m-1} \myomega_{ns} k\klasm{\xn,\xs} \enndelta[s]
+ \Landauno{h^{\myqq+1}} 
\end{align*}
holds uniformly for $ n = \myseqqsh{\mym+\mu}{\mym+\mu+1}{\nmax} $, where $ \gamma_0, \gamma_1, \ldots $ are given by \refeq{gamma-def}.
Representation \refeq{main-msm-b-2} in the proof of Theorem \ref{th:main-msm} thus can be written as
\begin{align}
h \Ah \Ehdelta 
= 
\rh \plus \Fha \plus \Fhb, 
\withsome \Fha \in \reza^{\nmaxpmu}, \ \maxnorm{\Fha} = \Landauno{h^{\myqq+1}},
\label{eq:msm-estimate-with-constant-ab}
\end{align}
and some vector $ \Fhb \in \reza^{\nmaxpmu} $ with
\begin{align}
\maxnorm{\Fhb} \le \delta + h 
\maxnorm{k}
\Big\{\max_{ m+\mu \le n \le \nmax } \mysum{s=0}{m-1} \modul{\myomega_{ns} } \Big\} 
\max_{0 \le s \le m-1} \modul{e_s^\delta}.
\label{eq:msm-estimate-with-constant-b}
\end{align}
So in view of
\refeq{msm-estimate-with-constant-a}--\refeq{msm-estimate-with-constant-b}
we need to provide upper bounds for
$ \maxnorm{\Sh^{-1}} $ and 
$ \maxnorm{\Ah^{-1}} $.
For this purpose let $ L \ge 0 $ denote a Lipschitz constant of the kernel $ k $ 
with respect to the first variable, i.e.,
\begin{align*}
\vert k(x_1,y) - k(x_2,y) \vert \le L 
\vert x_1 - x_2 \vert \for
(x_1,y), (x_2, y) \in E,
\end{align*}
where the set $ E $ is introduced in Assumption \ref{th:msm-assump}.
Then the matrix $ \Sh,  h \le \hmax, $ can be written in the form
$ \Sh = T + F_h $, where the perturbation matrix $ F_h \in \myrnn[m] $ satisfies
$ \maxnorm{F_h} \le \maxnorm{T} m L h $.
It then follows from standard perturbation results for matrices that
\begin{align}
\maxnorm{\Sh^{-1}} \le (1+L) \maxnorm{T^{-1}} = C_{2a}
\for
0 < h \le \frac{1}{m(1+L) \cond_\infty(T)},
\label{eq:Shinv-bound}
\end{align}
where $ \cond_\infty(T) = \maxnorm{T} \maxnorm{T^{-1}} $,
and the upper bound for $ h $ in \refeq{Shinv-bound}
can be ignored
if $ L = 0 $.

For the estimation of $ \maxnorm{\Ah^{-1}} $ we have to take a closer look at part (3) of the proof of Theorem \ref{th:main-msm}. We obviously have
$ \maxnorm{\Whinv} \le \mysumtxt{s=0}{\infty} \vert \gaminv_s \vert $
for $ h > 0 $, and we next estimate the entries 
of $ \Whinv \Ah = (b_{nj}) $ (cf.~\refeq{bnj-def}).
Continuing from \refeq{convergence-theorem-j}
gives $ \vert b_{nn} \vert \ge 1- L \vert \xn[n+\mym+\mu]-x_{n+\emm} \vert
= 1 - \muL h \ge \tfrac{1}{1+\muL} $ for $ h \le \tfrac{1}{1+\muL} $.
Proceeding from \refeq{convergence-theorem-i}
yields $ \vert b_{nj} \vert \le C_3 L h $ for $ j < n $, where the constant $ C_3 $ is chosen as in the statement of the proposition. An application of the discrete version of Gronwall's lemma now results in
\begin{align*}
\maxnorm{\Ah^{-1}} & \le 
\maxnorm{(\Whinv \Ah)^{-1}} \maxnorm{\Whinv } 
\\
& \le (1+\mu L)
\big(\mysum{s=0}{\infty} \vert \gaminv_s \vert \big)
\exp((1+\muL) C_3 L \diffxmaxxminkla)
= C_{2b} \for 0 < h \le \tfrac{1}{1+\muL},
\end{align*}
where the considered upper bound for $ h $ can be ignored
if $ \mu = 0 $ or $ L = 0 $ holds. Note also that this upper bound for $ h $ is not smaller than the upper bound for $ h $ given in \refeq{Shinv-bound} which justifies the
definition of $ \myhbound $ given in the proposition.
\proofendspruch

\subsection{Implementation of the balancing principle}
In the sequel we assume that the conditions of 
Assumption \ref{th:msm-assump} are satisfied.
It is convenient to introduce new notation for the set of \gridpoints
and for the approximations generated by the considered \msm
to indicate dependence on the \stepsize $ h $:
\begin{align}
& \Ih{h} = \inset{\xn = \xminpl nh \mid n = \myseqq{0}{1}{N-\mu} }, 
\text{ where } h = \tfrac{\diffxmaxxmin}{N},  \ N \ge \Nmin, 
\nonumber \\
& \udeltas{\cdot}{h}: \Ih{h} \to \reza, \quad \xn \mapsto \undelta.
\label{eq:udeltah-def}
\end{align}
In the sequel we consider the following sequence of geometrically increasing \stepsizes: 
\begin{align}
\hs & = \tfrac{\diffxmaxxmin}{\Ns}, \ \Ns = \Nsmin 2^{\kappa (\squer-s)} \for s=\myseqq{0}{1}{\squer},
\label{eq:hs-def}
\end{align}
where $ \squer = \squer(\delta) \ge 0 $ and $ \Nsmin = \Nsmin(\delta) \ge 1 $ are some integers that may depend on $ \delta $, and $ \kappa \ge 1 $ is some fixed integer.
The set of those \stepsizes will be denoted by $ \Hset $, \ie
\begin{align*}
\Hset & = \inset{\hs[0] < \hs[1] < \cdots < \hs[\squer]}.
\end{align*}
Note that due to the special form of the \stepsizes we have 
\begin{align*}
\Ih{\hs[\squer]} \subset \Ih{\hs[\squer-1]} \subset \cdots \subset \Ih{\hs[0]}.
\end{align*}
In the sequel we assume that $ \squer \ge 0 $
and 
$ \Nsmin \ge 1 $
are chosen so 
that the \stepsizes 
$ \hs[0] $
and $ \hs[\squer] $
are respectively sufficiently small
and sufficiently large.
More precisely, we assume the following:
\begin{align}
\hs[0] \le c_*\delta^{1/2}, 
\qquad
c_{**}\delta^{1/(\myp+1)} \le \hs[\squer] \le \myhbound
\qquad (0 < \delta \le \delta_0),
\label{eq:hnull-gross-hsquer-klein}
\end{align}
where $ c_*, \, c_{**} $ and $  \delta_0 > 0 $ denote some constants,
and $ \myhbound $ is chosen as in Proposition~\ref{th:msm-estimate-with-constant}.
In addition, $ c_{**} $ is chosen sufficiently small such an $\hs[\squer] $ satisfying \refeq{hnull-gross-hsquer-klein} exists.

We consider the following a posteriori choice of the \stepsize $ h = \hdelta $:
\begin{align}
\hdelta = \max \Hdelta,
\ \textup{where } \Hdelta \defeq \inset{ \hstar \in \Hset :
\textup{ for } h, \hstst \in \Hset \textup{ with } h < \hstst \le \hstar \textup{ we have }  &
\nonumber \\
\max_{y \in \Ih{\hstst}} \vert \udeltas{y}{\hstst} - \udeltas{y}{h} \vert
\le \myeta \tfrac{\delta}{h}}, &
\label{eq:hdelta-def}
\end{align}
where $ \myeta > 2 C_2 $ holds, with
$ C_2 $ chosen as in Proposition \ref{th:msm-estimate-with-constant}.
Note that by definition we have $ \hs[0] = \min \Hset \in \Hdelta $ 
so that $ \Hdelta \neq \varnothing $, and thus
$ \hdelta $ in \refeq{hdelta-def} is well-defined.
The adaptive choice of the \stepsize given by \refeq{hdelta-def}
is in fact a balancing principle.
For a general introduction to this class of a posteriori parameter choice strategies see, \eg 
\myciteatwo{Lepski\u{\i}}{Lepskii[90]},
\myciteatwo{Math\'{e}}{Mathe[06]},
\myciteb{Pereverzev}{Schock}{05}, or
\myciteb{Lu}{Pereverzev}{13}.
\begin{remark}
The strategy \refeq{hdelta-def} is in fact a nonstandard balancing principle.
We recall that the classical balancing principle chooses, in our framework, the maximum from the set $ \Hdeltatwo \defeq \inset{ \hstst \in \Hset :
\vert \udeltas{y}{\hstst} - \udeltas{y}{h} \vert
\le \myeta \tfrac{\delta}{h} \ \text{for} \ y \in \Ih{\hstst}, \ h \in \Hset, \ h < \hstst} $.
The latter maximum may be larger than
$ \hdelta $ introduced in \refeq{hdelta-def}, in general.
In turns out, however, that the \stepsize $ \hdelta $ is sufficiently large to get similar estimates as for the standard balancing principle; see the following theorem for details.

The nonstandard version \refeq{hdelta-def} of the balancing principle is considered for computational reasons: it may require less computational amount than the standard version.
In fact, a possible strategy to determine $ \hdelta $ 
is to verify for $ s = 1,2,\ldots $ whether $ \hs \in \Hdeltatwo $ is satisfied, and this procedure stops if $ \hs \not \in \Hdeltatwo $ holds for the first time, or if $ s = \squer $. In the former case we have $ \hdelta = \hs[s-1] $,
and then there is no need to consider the \stepsizes 
$ \myseqq{\hs[s+1]}{\hs[s+2]}{\hs[\squer]} $.
\end{remark}
We have the following convergence result:
\begin{theorem}
Let Assumption \ref{th:msm-assump} be satisfied, and
let 
$ \udeltas{\cdot}{h} $ and $ \hdelta $ be given by
\refeq{udeltah-def} and \refeq{hdelta-def}, respectively.
Then the following estimates hold,
\begin{align}
\max_{y \in \Ih{\hdelta}} \vert \udeltas{y}{\hdelta} - u(y) \vert
& = \Landauno{\delta^{\myqq/(\myqq+1)}} \as \delta \to 0,
\label{eq:estimate-balancing-principle} \\
\hdelta & \ge C \delta^{1/(\myqq+1)},
\label{eq:estimate-hdelta-balancing-principle}
\end{align}
where $ C > 0 $ denotes some constant which is independent of $ \delta $.
\label{th:balancing-principle-main}
\end{theorem}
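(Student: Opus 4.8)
The plan is to run a balancing (Lepski\u{\i}-type) argument on top of the deterministic two-sided bound of Proposition~\ref{th:msm-estimate-with-constant}. Set $e(h) \defeq \max_{y \in \Ih{h}} \vert \udeltas{y}{h} - u(y) \vert$, so that \refeq{msm-estimate-with-constant} reads $e(h) \le C_1 h^{\myqq} + C_2 \tfrac{\delta}{h}$ for every $h \in \Hset$ (each such $h$ is $\le \hs[\squer] \le \myhbound$ by \refeq{hnull-gross-hsquer-klein}, so the proposition applies). The pivotal object is a balanced \stepsize: I would let $h_{+} = h_{+}(\delta)$ be the largest element of $\Hset$ with $3 C_1 h_{+}^{\myqq+1} \le C_2 \delta$. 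This is well defined for $\delta$ small, because \refeq{hnull-gross-hsquer-klein} gives $\hs[0] \le c_* \delta^{1/2}$ and $(\myqq+1)/2 \ge 1$, hence $3 C_1 \hs[0]^{\myqq+1} \le C_2\delta$ for $\delta \le \delta_0$ (which needs $c_*$ small as well when $\myqq = 1$). One also gets $h_{+} \ge c\,\delta^{1/(\myqq+1)}$ for some $c>0$ and $\delta$ small: either the next larger element of $\Hset$ (if any) violates $3 C_1 h^{\myqq+1}\le C_2\delta$, which by the geometric spacing in \refeq{hs-def} forces $h_{+} \ge 2^{-\kappa}\big(C_2\delta/(3C_1)\big)^{1/(\myqq+1)}$; or $h_{+} = \hs[\squer] \ge c_{**}\delta^{1/(\myp+1)} \ge c_{**}\delta^{1/(\myqq+1)}$, using $\myqq \le \myp$ and $\delta \le 1$.

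Next I would show $h_{+} \in \Hdelta$; by \refeq{hdelta-def} this already yields $\hdelta \ge h_{+}$, i.e.\ \refeq{estimate-hdelta-balancing-principle}. Take $h, \hstst \in \Hset$ with $h < \hstst \le h_{+}$. Then $h \le h_{+}$ and $\hstst \le h_{+}$ give $C_1 h^{\myqq} \le \tfrac13 C_2 \tfrac{\delta}{h}$ and $C_1 (\hstst)^{\myqq} \le \tfrac13 C_2 \tfrac{\delta}{\hstst}$, while two distinct \stepsizes in $\Hset$ differ by a factor $\ge 2^{\kappa}\ge 2$, so $\hstst \ge 2h$. Hence for every $y \in \Ih{\hstst}\subset \Ih{h}$,
\[
\vert \udeltas{y}{\hstst} - \udeltas{y}{h} \vert \le e(\hstst) + e(h) \le \big(\tfrac16 + \tfrac12\big) C_2 \tfrac{\delta}{h} + \big(\tfrac13 + 1\big) C_2 \tfrac{\delta}{h} = 2 C_2 \tfrac{\delta}{h} < \myeta \tfrac{\delta}{h},
\]
the last inequality using $\myeta > 2 C_2$; this is precisely the membership condition in \refeq{hdelta-def}.

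For \refeq{estimate-balancing-principle} I would then exploit $\hdelta \ge h_{+}$. Either $\hdelta = h_{+}$, or $h_{+} < \hdelta$ and the defining property of $\hdelta \in \Hdelta$ applied with the pair $(h,\hstst) = (h_{+},\hdelta)$ gives $\max_{y\in\Ih{\hdelta}}\vert \udeltas{y}{\hdelta} - \udeltas{y}{h_{+}}\vert \le \myeta\tfrac{\delta}{h_{+}}$. In either case, since $\Ih{\hdelta}\subset\Ih{h_{+}}$ and $e(h_{+}) \le C_1 h_{+}^{\myqq} + C_2\tfrac{\delta}{h_{+}} \le 2 C_2\tfrac{\delta}{h_{+}}$, the triangle inequality yields, for all $y\in\Ih{\hdelta}$,
\[
\vert \udeltas{y}{\hdelta} - u(y)\vert \le \myeta\tfrac{\delta}{h_{+}} + 2 C_2\tfrac{\delta}{h_{+}} = \Landauno{\tfrac{\delta}{h_{+}}} = \Landauno{\delta^{\myqq/(\myqq+1)}},
\]
where the final step uses $h_{+}\gtrsim \delta^{1/(\myqq+1)}$ from the first paragraph; this is \refeq{estimate-balancing-principle}.

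The triangle-inequality steps are routine, so the real work is the bookkeeping of constants: the factor $3$ in the definition of $h_{+}$, together with the geometric ratio $2^{\kappa}\ge 2$ of the \stepsizes in \refeq{hs-def}, is tuned exactly so that $e(\hstst)+e(h)\le 2 C_2\tfrac{\delta}{h}$, which is what makes the calibration $\myeta > 2 C_2$ of \refeq{hdelta-def} suffice — a cruder estimate would only give $4 C_2\tfrac{\delta}{h}$. The other point needing attention is the lower bound $h_{+}\gtrsim \delta^{1/(\myqq+1)}$ in the edge case where the exact balancing value exceeds $\hs[\squer]$; this is precisely where item \ref{item:order-p-m} of Assumption~\ref{th:msm-assump} ($\myqq\le\myp$) and the lower bound $\hs[\squer]\ge c_{**}\delta^{1/(\myp+1)}$ in \refeq{hnull-gross-hsquer-klein} are used. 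Everything else — the existence of $h_{+}$ and the nested-grid facts $\Ih{\hdelta}\subset\Ih{h_{+}}$ — is bookkeeping.
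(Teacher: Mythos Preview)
Your argument is the same Lepski\u{\i}-type balancing as the paper's: define a pivot \stepsize (your $h_+$ is precisely $\max\Mdelta$ in the paper's notation, where $\Mdelta=\{h\in\Hset: h^{\myqq+1}\le C_3\delta\}$, with your choice $C_3 = C_2/(3C_1)$), show it lies in $\Hdelta$ so that $\hdelta \ge h_+$, and then combine this with the triangle inequality and Proposition~\ref{th:msm-estimate-with-constant}. Your use of the geometric ratio $\hstst\ge 2h$ to hit exactly the constant $2C_2$ is a pleasant refinement; the paper instead simply takes $C_3$ small enough that $2(C_1C_3+C_2)\le\myeta$, without exploiting the ratio.

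There is one small gap. You assume $h_+$ exists, but for $\myqq = 1$ the bound $\hs[0]\le c_*\delta^{1/2}$ only yields $3C_1\hs[0]^{2} \le 3C_1 c_*^{2}\delta$, which is $\le C_2\delta$ only if $c_*$ is small enough --- and the setup \refeq{hnull-gross-hsquer-klein} places no such constraint on $c_*$. Your parenthetical acknowledges this but does not resolve it. The paper covers the hole by treating the case $\Mdelta=\varnothing$ separately: then $\hs[0]^{\myqq+1} > C_3\delta$ together with $\hs[0]\le c_*\delta^{1/2}$ forces $\hs[0]\sim\delta^{1/(\myqq+1)}$ (consistent only when $\myqq=1$, so for $\myqq\ge 2$ this case cannot occur for small $\delta$), and one finishes by applying the triangle-inequality estimate with $h=\hs[0]\le\hdelta$. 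Adding this short case distinction completes your argument.
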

\proof 
The proof is a compilation of techniques used, \eg in
\myciteb{Lu}{Pereverzev}{13}, and we thus give a sketch of a proof only.
A basic ingredient in the following analysis is provided by the following estimate,
which follows from
Proposition \ref{th:msm-estimate-with-constant} and
\refeq{hdelta-def}:
\begin{align}
\max_{y \in \Ih{\hdelta}} \vert \udeltas{y}{\hdelta} - u(y) \vert
& \le
\max_{y \in \Ih{\hdelta}} \vert \udeltas{y}{\hdelta} - \udeltas{y}{h} \vert
+
\max_{y \in \Ih{h}} \vert \udeltas{y}{h} - u(y) \vert
\nonumber \\
&\le C_1 h^\myqq + (\myeta + C_2) \frac{\delta}{h}
\foreach h \in \Hset, \ h \le \hdelta.
\label{eq:balancing-principle-main-d}
\end{align}
It now remains to determine some $ h \in \Hset $ with $  h \le \hdelta $ and
$ h \sim \delta^{1/(\myqq+1)} $;
the estimates \refeq{estimate-balancing-principle}--\refeq{estimate-hdelta-balancing-principle}
then easily follow from \refeq{balancing-principle-main-d}.
For this purpose we consider the set
\begin{align*}
\Mdelta \defeq \inset{ h \in \Hset : h^{\myqq+1} \le C_3 \delta}, 
\end{align*}
where $ C_3 > 0 $ is chosen so small such that $ 2(C_1C_3 +C_2) \le \myeta $ holds, 
with $ C_1 $ and $ C_2 $ being chosen as in Proposition \ref{th:msm-estimate-with-constant}. That choice of $ C_3 $ guarantees 
\begin{align*}
\Mdelta \subset \Hdelta
\end{align*}
which is shown in the sequel. For 
this purpose let
$ \hstar \in \Mdelta $ and $ h, \hstst \in \Hset $ with $ h < \hstst \le \hstar $.
We then have
\begin{align*}
\max_{y \in \Ih{\hstst}}
\vert \udeltas{y}{\hstst} - \udeltas{y}{h} \vert
& \le
\max_{y \in \Ih{\hstst}}
 \vert \udeltas{y}{\hstst} - u(y) \vert
+
\max_{y \in \Ih{h}} \vert \udeltas{y}{h} - u(y) \vert
\\
&\le C_1 {\hstst}^{\myqq} +C_2 \frac{\delta}{\hstst}
+ C_1 h^\myqq +C_2 \frac{\delta}{h} 
\le 2(C_1C_3+C_2) \frac{\delta}{h},
\end{align*}
where $ h, \hstst \in \Mdelta $ is taken into account. 
This shows $ \hstar \in \Hdelta $ and completes the proof of
the relation $ \Mdelta \subset \Hdelta $.

We are now in a position to verify \refeq{estimate-balancing-principle}--\refeq{estimate-hdelta-balancing-principle}, and
for this we consider two situations. In the case $ \Mdelta \neq \varnothing $ we define
$ \hdeltapl = \max \Mdelta $ and obtain
\begin{align}
\hdeltapl \le \hdelta, \quad
\hdeltapl \sim \delta^{1/(\myqq+1)},
\label{eq:balancing-principle-main-a}
\end{align}
where we assume that $ \delta \le \delta_0 $ holds.
The first statement in \refeq{balancing-principle-main-a} follows immediately 
from $ \Mdelta \subset \Hdelta $ and the definition 
of $ \hdelta $, see \refeq{hdelta-def}.
The second statement  
in \refeq{balancing-principle-main-a} follows in the case
$ \hdeltapl = \max \Hset $ (which is $ \hs[\squer] $ in fact) from
the second estimate in \refeq{hnull-gross-hsquer-klein}, and 
in the case $ \hdeltapl < \max \Hset $ it follows from
$ 2^\kappa \hdeltapl \in \Hset\backslash\Mdelta $.
Estimate \refeq{estimate-hdelta-balancing-principle} is an immediate consequence of
\refeq{balancing-principle-main-a}, and
estimate \refeq{estimate-balancing-principle}
then follows easily
from estimate \refeq{balancing-principle-main-d}, applied with $ h = \hdeltapl $.

On the other hand, $ \Mdelta = \varnothing $ means $ \min \Hset = \hs[0] \not \in \Mdelta $, and the first estimate in \refeq{hnull-gross-hsquer-klein} then implies
$ \min \Hset \sim \delta^{1/(\myqq+1)} $ for $ 0 < \delta \le \delta_0 $.
This shows \refeq{estimate-hdelta-balancing-principle},
and estimate \refeq{estimate-balancing-principle} follows easily
from \refeq{balancing-principle-main-d},
 applied with $ h = \min \Hset $.
\proofend

\section{Numerical experiments}
\label{num_exps}
As an illustration of the main results considered in Corollary \ref{th:msm-apriori-choice}
and Theorem \ref{th:balancing-principle-main}, we next present the results of numerical experiments for four
\voltinteqs of the form \refeq{volterra-inteq}, treated by different kind of \msms, respectively.

Here are two comments on the first three numerical tests, where a priori choices of the \stepsize are considered in fact:
\begin{mylist_indent}
\item
Numerical experiments on the interval $ \interval{\xmin}{\xmax} = \interval{0}{1} $
are employed for
\stepsizes
$ h = 1/2^\nu $ for $ \nu = \myseqq{5}{6}{12} $, with the exception of the order 4 BDF method. In the latter
method, the influence of rounding errors becomes clearly visible for 
$ \nu \ge 10 $.

\item
For each considered \stepsize $ h $ and each considered 
\msm with maximal order $ \myp $,
we consider \refeq{volterra-inteq} with some function $ u \in \UCpl{\myp-1}[0,1] $, and the noise level $ \delta = h^{1/(\myp+1)} $ is considered.
\end{mylist_indent}

In all numerical experiments, the perturbations are of the 
form $ \fndelta = \fxn + \Deltap_n $
with uniformly distributed random values $ \Deltap_n $ with
$ \modul{\Deltap_n} \le \delta $.

\begin{example}
\label{th:num_example_1}
First we consider the repeated midpoint rule which in fact coincides with the 2-step Nystr\"om method (see Example \ref{th:msm-examples}).
In the formulation \refeq{multstep-def}, this quadrature method reads as follows,
$ \myphi_{r+2} - \myphi_{r} = 2h \myfun_{r+1} $ for $ r = \myseqq{0}{1}{n-2} $.
This method is applied to the following linear Volterra integral equation 
of the first kind,
\begin{align}
\ints{0}{x}
{\cos\kla{x-y} u\klasm{y} }{dy}
=
\sin x =: f(x) \for \intervalarg{x}{0}{1},
\label{eq:numeric-a}
\end{align}
with exact solution
$ u\klasm{y} = 1 $
for $ \intervalarg{y}{0}{1} $.
The conditions
of Assumption \ref{th:msm-assump} are satisfied with $ \emm = \myp = \myqq = 2 $.
The numerical results are shown in Table \ref{tab:num1}. There,
$ \maxnorm{f} $ denotes the maximum norm of the function $ f $.
All numerical experiments are employed using the program system \octave \kla{http://www.octave.org}.
\begin{table}[h!]
\hfill
\begin{tabular}{|| r | c |@{\hspace{5mm} } l | c | c ||} 
\hline
\hline
$ N $  
& $ \delta $
& $ 100 \myast \delta/\maxnorm{f} $
& $ \ \max_{n} \modul{\undelta - u\klasm{\xn}} \ $
& $ \ \max_{n} \modul{\undelta - u\klasm{\xn}} \ / \delta^{2/3} \ $
\\ \hline \hline
$  32 $ & $3.1 \myast 10^{-5}$ & $3.70 \myast 10^{-3}$ & $1.05 \myast 10^{-3}$ & $ 1.07$ \\
$  64 $ & $3.8 \myast 10^{-6}$ & $4.58 \myast 10^{-4}$ & $3.09 \myast 10^{-4}$ & $1.27$ \\
$  128 $ & $4.8 \myast 10^{-7}$ & $5.70 \myast 10^{-5}$ & $6.56 \myast 10^{-5}$ & $1.08$ \\
$  256 $ & $6.0 \myast 10^{-8}$ & $7.10 \myast 10^{-6}$ & $1.69 \myast 10^{-5}$ & $1.11$ \\
$  512 $ & $7.5 \myast 10^{-9}$ & $8.87 \myast 10^{-7}$ & $7.25 \myast 10^{-6}$ & $1.90$ \\
$ 1024 $ & $9.3 \myast 10^{-10}$ & $1.11 \myast 10^{-7}$ & $1.09 \myast 10^{-6}$ & $1.14$ \\
$ 2048 $ & $1.2 \myast 10^{-10}$ & $1.38 \myast 10^{-8}$ & $2.71 \myast 10^{-7}$ & $1.14$ \\
$ 4096 $ & $1.5 \myast 10^{-11}$ & $1.73 \myast 10^{-9}$ & $6.71 \myast 10^{-8}$ & $1.13$ \\
\hline
\hline
\end{tabular}
\hfill 
\caption{Numerical results of the repeated midpoint rule applied to equation \refeq{numeric-a}}
\label{tab:num1}
\end{table} 
\end{example}
\begin{example}
\label{th:num_example_2}
Next we present some numerical results for the order 4 BDF method which
in the formulation \refeq{multstep-def} reads as follows,
$  \frac{1}{12} ( 
25 \myphi_{r+4} \minus 48 \myphi_{r+3} \plus 36 \myphi_{r+2} \minus 16 \myphi_{r+1}  + 3 \myphi_r) =
h \myfun_{r+4} $ for $ r = \myseqq{0}{1}{n-4} $.
This method is applied
to the same operator as for the first numerical experiment but with a different \rhs:
\begin{align}
\ints{0}{x}
{\cos\kla{x-y} u\klasm{y} }{dy}
=
\underbrace{1-\cos x}_{\displaystyle =: f(x)} \for \intervalarg{x}{0}{1},
\label{eq:numeric-b}
\end{align}
with exact solution
$ u\klasm{y} = y $
for $ \intervalarg{y}{0}{1} $.
The conditions
of Assumption \ref{th:msm-assump} are satisfied with $ \emm = \myp = \myqq= 4 $.
\Stepsizes, noise levels, \inapps and starting values are chosen similar to the example considered above. The results are shown in Table \ref{tab:num2}. 
\begin{table}[h!]
\hfill
\begin{tabular}{|| r | c |@{\hspace{5mm} } l | c | c ||}
 \hline
 \hline
$ N $  
& $ \delta $
& $ 100 \myast \delta/\maxnorm{f} $
& $ \ \max_{n} \modul{\undelta - u\klasm{\xn}} \ $
& $ \ \max_{n} \modul{\undelta - u\klasm{\xn}} \ / \delta^{4/5} \ $
\\ \hline \hline
$ 32$ & $3.0 \myast 10^{-8}$ & $6.48 \myast 10^{-6}$ & $7.14 \myast 10^{-6}$ & $7.48$ \\
$  64$ & $9.3 \myast 10^{-10}$ & $2.03 \myast 10^{-7}$ & $4.85 \myast 10^{-7}$ & $8.14$ \\
$ 128$ & $2.9 \myast 10^{-11}$ & $6.33 \myast 10^{-9}$ & $2.85 \myast 10^{-8}$ & $7.65$ \\
$ 256$ & $9.1 \myast 10^{-13}$ & $1.98 \myast 10^{-10}$ & $2.11 \myast 10^{-9}$ & $9.07$ \\
$ 512$ & $2.8 \myast 10^{-14}$ & $6.18 \myast 10^{-12}$ & $1.28 \myast 10^{-10}$ & $8.83$ \\
$ 1024$ & $8.9 \myast 10^{-16}$ & $1.93 \myast 10^{-13}$ & $2.32 \myast 10^{-11}$ & $25.50$ \hspace{1mm}  \\
\hline
\hline
\end{tabular}
\hfill 
\caption{Numerical results of the 4th order BDF method applied to equation \refeq{numeric-b}}
\label{tab:num2}
\end{table} 
\end{example}
\begin{example}
\label{th:num_example_3}
Next we present the results of numerical experiments with the second order Adams--Bashfort method
$ \myphi_{r+2} - \myphi_{r+1} = \tfrac{h}{2}(3 \myfun_{r+1} -\myfun_r) $ for $ r = \myseqq{0}{1}{n-2} $.
The quadrature scheme formulation of this method, see \refeq{msm-noise-proofrep}, is
$ \myphi_{n} 
= \tfrac{h}{2}(3 \myfun_{n-1} +2 \myfun_{n-2} + \cdots + 2 \myfun_{1} - \myfun_{0}) + \myphi_{1}
= \tfrac{h}{2}(3 \myfun_{n-1} +2 \myfun_{n-2} + \cdots + 2 \myfun_{2} + 3 \myfun_{1}) $, 
where the latter identity follows from  
the fact that $ \wstart{10} =
\wstart{11} = \frac{1}{2} $, see \refeq{start-weights-system-matrix}.

This method is applied to 
the following test problem: 
\begin{align}
\ints{0}{x}
{\kla{1+x-y} u\klasm{y} }{dy}
=
\underbrace{x-1+e^{-x}}_{\displaystyle =: f(x)} \for \intervalarg{x}{0}{1},
\label{eq:numeric-c}
\end{align}
with exact solution
$ u\klasm{y} = ye^{-y} $
for $ \intervalarg{y}{0}{1} $.
The conditions
of Assumption \ref{th:msm-assump} are satisfied with $ \emm = \myp = \myqq = 2 $.
\Stepsizes, noise levels, \inapps and starting values are chosen similar to the example considered above. The results are shown in Table \ref{tab:num3}. 
\begin{table}[h!]
\hfill
\begin{tabular}{|| r | c |@{\hspace{5mm} } l | c | c ||}
 \hline
 \hline
$ N $  
& $ \delta $
& $ 100 \myast \delta/\maxnorm{f} $
& $ \ \max_{n} \modul{\undelta - u\klasm{\xn}} \ $
& $ \ \max_{n} \modul{\undelta - u\klasm{\xn}} \ / \delta^{2/3} \ $
\\ \hline \hline
$  32 $ & $3.1 \myast 10^{-5}$ & $8.76 \myast 10^{-3}$ & $1.93 \myast 10^{-3}$ & $1.98$ \\
$   64 $ & $3.8 \myast 10^{-6}$ & $1.07 \myast 10^{-3}$ & $5.21 \myast 10^{-4}$ & $2.13$ \\
$  128 $ & $4.8 \myast 10^{-7}$ & $1.31 \myast 10^{-4}$ & $1.29 \myast 10^{-4}$ & $2.11$ \\
$  256 $ & $6.0 \myast 10^{-8}$ & $1.63 \myast 10^{-5}$ & $3.84 \myast 10^{-5}$ & $2.52$ \\
$  512 $ & $7.5 \myast 10^{-9}$ & $2.03 \myast 10^{-6}$ & $8.99 \myast 10^{-6}$ & $2.36$ \\
$ 1024$ & $9.3 \myast 10^{-10}$ & $2.54 \myast 10^{-7}$ & $2.36 \myast 10^{-6}$ & $2.47$ \\
$ 2048$ & $1.2 \myast 10^{-10}$ & $3.17 \myast 10^{-8}$ & $5.95 \myast 10^{-7}$ & $2.50$ \\
$ 4096$ & $1.5 \myast 10^{-11}$ & $3.96 \myast 10^{-9}$ & $1.60 \myast 10^{-7}$ & $2.68$ \\
\hline
\hline
\end{tabular}
\hfill 
\caption{Numerical results of the 2nd order Adams--Bashfort method applied to equation \refeq{numeric-c}}
\label{tab:num3}
\end{table} 

\noindent
Note that the relative errors in the \rhs 
presented in the third column (of all three tables in fact)
are rather small, respectively.
\end{example}
\begin{example}
\label{th:num_example_4}
Here we consider again the second order Adams--Bashfort method,
see Example \ref{th:num_example_3}, this time 
applied to the problem of numerical differentiation:
\begin{align}
\ints{0}{x}{
u\klasm{y} }{dy}
= f(x) \for \intervalarg{x}{0}{1},
\quad \textup{with} \
u(y) = \left\{\begin{array}{rl} 
2y, & 0 \le y \le \frac{1}{2}, \\
2(1-y), & \frac{1}{2} < y \le 1,
\end{array} \right.
\label{eq:numeric-d}
\end{align}
which means $ u \in \UCpl{0}\interval{0}{1} $ in fact.
We consider the balancing principle, and for this
we need to take a closer look at Proposition \ref{th:msm-estimate-with-constant}.
Elementary computations show that
$ \maxnorm{T^{-1}} = \tfrac{5}{2} $ and
$ \mysumtxt{s=0}{\infty} \vert \gaminv_{s} \vert = \frac{4}{3} $. 
This shows that estimate \refeq{msm-estimate-with-constant}
holds with $ C_2 = \frac{19}{3} $, and
thus we may choose $ \myeta = 13.0 $ in \refeq{hdelta-def}.
For each considered noise level $ \delta $, the integers $ \squer $ and
$\Nsmin $ are chosen such that
$\hs[0] $ is the largest \stepsize $ \le \delta^{1/2} $, and
$\hs[\squer] $ is the smallest \stepsize satisfying $ \ge \delta^{1/3} $
(see \refeq{hnull-gross-hsquer-klein}).
We choose $ \kappa = 1 $ in \refeq{hs-def}.
The results of the numerical experiments are shown in Table \ref{tab:num4}. 
\begin{table}[h!]
\hfill
\begin{tabular}{|| r | c |@{\hspace{5mm} } r |  c | c | c ||}
 \hline
 \hline
$ \delta $ \hspace{4mm} \mbox{}
& $ 100 \myast \delta/\maxnorm{f} $
& $ N(\delta) $  
& $ h(\delta)/\delta^{1/2} $  
& $ \ \max_{n} \modul{\enndelta[\n]} \ $
& $ \ \max_{n} \modul{\enndelta[\n]} \ / \delta^{1/2} \ $
\\ \hline \hline
 $1.0 \myast 10^{-5}$ & $2.00 \myast 10^{-3}$ & $  92$ & $ 3.44$ & $1.45 \myast 10^{-2}$  & 4.60 \\
 $2.5 \myast 10^{-6}$ & $5.00 \myast 10^{-4}$ & $ 146$ & $ 4.33$ & $9.10 \myast 10^{-3}$  & 5.76 \\
 $6.2 \myast 10^{-7}$ & $1.25 \myast 10^{-4}$ & $ 232$ & $ 5.45$ & $5.77 \myast 10^{-3}$  & 7.30 \\
$1.6 \myast 10^{-7}$ & $3.13 \myast 10^{-5}$ & $ 740$ & $ 3.42$ & $1.80 \myast 10^{-3}$  & 4.55 \\
 $3.9 \myast 10^{-8}$ & $7.81 \myast 10^{-6}$ & $1176$ & $ 4.30$ & $1.15 \myast 10^{-3}$  & 5.83 \\
\hline
\end{tabular}
\hfill 
\caption{Numerical results of the 2nd order Adams--Bashfort method, applied to equation \refeq{numeric-d}}
\label{tab:num4}
\end{table} 
\end{example}
\section{Conclusions}
In the present paper we consider the regularization of linear first-kind Volterra integral equations with smooth kernels and perturbed given \rhss.
As regularization scheme we consider quadrature methods that are generated by
linear multistep methods for solving ODEs, with an appropriate starting procedure.
The regularizing properties of an a priori choice of the \stepsize as well as the balancing principle as an adaptive choice of the \stepsize are analyzed, with a variant of the balancing principle which sometimes requires less amount of computational work than the standard version of this principle.

In the case of exact data, the considered scheme 
is similar to that in Wolkenfelt (\mynocitea{Wolkenfelt}{79}, \mynocitea{Wolkenfelt}{81}).
However, our analysis is different from that in those two papers and 
allows less smoothness of the involved functions in fact.
All used smoothness assumptions in the present paper are of the form 
$ \UCpl{\myqq-1} $ instead of $ \Cp{\myqq} $ which enlarge the classes of admissible functions further.

It turns out that an application of the balancing principle for the choice of the \stepsize is possible, but for general kernels $ k $ the coefficient of the error propagation term $ \lfrac{\delta}{h} $ turns out to be rather large which in fact results from an application of the discrete Gronwall inequality in the proof of 
Theorem~\ref{th:main-msm}.

%

\end{document}